\def\tikzfig#1#2#3{%
\begin{figure}[htb]%
  \centering
\begin{tikzpicture}#3
\end{tikzpicture}
  \caption{#2}
  \label{fig:#1}%
\end{figure}%
}
\def\smalldot#1{\draw[fill=black] (#1) %
 node [inner sep=1.3pt,shape=circle,fill=black] {}}
\newtheorem{theorem}{Theorem}[subsection]
\newtheorem{lemma}[theorem]{Lemma}
\newtheorem{working}[theorem]{Working Hypothesis}
\theoremstyle{remark}
\newtheorem{remark}[equation]{Remark}
\newcommand{\ring}[1]{\mathbb{#1}}
\newcommand{\op}[1]{\hbox{#1}}
\newcommand{\ang}[1]{\left\langle{#1}\right\rangle}
\def\sl{\mathfrak{sl}_2(\ring{R})}
\def\SL{\op{SL}_2(\ring{R})}
\def\SO{\op{SO}_2(\ring{R})}
\def\h{\mathfrak h}
\def\hstar{{\mathfrak h}^\star}
\def\Mstar{M^\star}
\def\D{\ring{D}}
\def\Hlie{H_{Lie}}
\def\DR{D_{min}}
\newcommand\Lsing{\Lambda_{sing}}
\newcommand\lsing{\lambda_{sing}}
\newcommand\ee[1]{e_{#1}^*}
\newcommand{\partials}[2]{\frac{\partial #1}{\partial #2}}
\title{The Reinhardt Conjecture as an Optimal Control Problem}
\author{Thomas C. Hales\thanks{Research supported by NSF grant
    1104102.  I thank W\"oden Kusner for discussions related to this
    problem.}}  
\date{} 
\begin{document}

\maketitle

\begin{abstract} 
  In 1934, Reinhardt conjectured that the shape of the centrally
  symmetric convex body in the plane whose densest lattice packing has
  the smallest density is a smoothed octagon.  This conjecture is
  still open.  We formulate the
  Reinhardt Conjecture as a problem in optimal control theory.

  The smoothed octagon is a Pontryagin extremal trajectory with
  bang-bang control.  More generally, the smoothed regular $6k+2$-gon
  is a Pontryagin extremal with bang-bang control.  The smoothed
  octagon is a strict (micro) local minimum to the optimal control
  problem.
  
  The optimal solution to the Reinhardt problem is a trajectory
  without singular arcs.  The extremal trajectories that do not meet
  the singular locus have bang-bang controls with finitely many
  switching times.

  Finally, we reduce the Reinhardt problem to an optimization problem
  on a five-dimensional manifold.  (Each point on the manifold is an
  initial condition for a potential Pontryagin extremal lifted
  trajectory.)  We suggest that the Reinhardt conjecture might
  eventually be fully resolved through optimal control theory.

  Some proofs are computer-assisted using a computer algebra system.
\end{abstract}

\parskip=0.8\baselineskip
\baselineskip=1.05\baselineskip

\newenvironment{blockquote}{%
  \par%
  \medskip%
  \baselineskip=0.7\baselineskip%
  \leftskip=2em\rightskip=2em%
  \noindent\ignorespaces}{%
  \par\medskip}

\section{Introduction}

In 1934, Reinhardt conjectured that the shape of centrally symmetric
body in the plane whose densest lattice packing has the smallest
density is a smoothed octagon (Figure \ref{fig:octagon}).  The
corners of the octagon are rounded by hyperbolic arcs.  For popular
accounts of the Reinhardt conjecture, including some spectacular
animated graphics by Greg Egan, see \cite{baez-egan}, \cite{baez}.

\tikzfig{octagon}{ A smoothed octagon is conjectured to have the worst
  best packing among centrally symmetric disks in the plane.  }{
\draw (1, 0) --  (1., 0.229378) --  (0.969314, 0.403215) --  (0.839864, 
  0.544075) --  (0.659754, 0.714692) --  (0.5, 0.866025);
\draw (0.5, 0.866025) --  (0.340246, 0.944071) --  (0.129449, 
  0.94729) --  (-0.129449, 0.94729) --  (-0.340246, 0.944071) --  (-0.5, 
  0.866025);
\draw (-0.5, 0.866025) --  (-0.659754, 0.714692) --  (-0.839864, 
  0.544075) --  (-0.969314, 0.403215) --  (-1., 0.229378) --  (-1., 0);
\draw (-1., 0) --  (-1., -0.229378) --  (-0.969314, -0.403215) --  (-0.839864,  
-0.544075) --  (-0.659754, -0.714692) --  (-0.5, -0.866025);
\draw (-0.5, -0.866025) --  (-0.340246, -0.944071) --  (-0.129449, -0.94729) --   
(0.129449, -0.94729) --  (0.340246, -0.944071) --  (0.5, -0.866025);
\draw (0.5, -0.866025) --  (0.659754, -0.714692) --  (0.839864, -0.544075) --   
(0.969314, -0.403215) --  (1., -0.229378) --  (1., 0);
}

This article is a continuation of an article from 2011, which formulates
the Reinhardt conjecture as a problem in the calculus of variations
\cite{hales2011reinhardt}.  This article reformulates the Reinhardt
conjecture as an optimal control problem.

Bang-bang controls of an optimal control problem are controls that
switch between extreme points of a convex control set (often with a
finite number of switches).  A major theme of optimal control is the
study of bang-bang controls, and the extremal trajectories of many
control problems have bang-bang controls.  Intuitively, bang-bang
controls switch from one extreme position to another: navigating a
craft by flooring the accelerator pedal then slamming on the brakes;
or steering a vehicle by making the sharpest possible turns to the
left and to the right; or maximizing wealth by investing all resources
in a single financial asset for a time, then suddenly moving all
resources elsewhere.

The basic insight of this article, from which everything else follows,
is that the smoothed octagon can be described by a bang-bang control
with finitely many switches.  The smoothed octagon exhibits the
extreme behavior that is characteristic of a bang-bang control: each
arc of the smoothed octagon is flattened out as much as possible (the
straight edges) or is as highly curved as possible (the hyperbolic
arcs), with finitely many switches between these extremes.  Because of
this bang-bang behavior, the natural context for the Reinhardt
conjecture is optimal control theory.  Viewed in this context, the
Reinhardt conjecture is transformed from a puzzling problem in
discrete geometry to a rather typical problem in optimal control.  In
fact in many ways, this is a textbook example of optimal control, by
embodying significant aspects of the general theory in a single
problem.

The original and guiding inspiration for this research was the visual
similarity between the solutions to the Dubins car problem and
segments of smoothed polygons (Figure \ref{fig:dubins}).  Recall that
the Dubins car problem is the optimal control problem that asks for
the shortest path in the plane from an initial position (and direction) to a
terminal position and direction, subject to a given bound on the
absolute value of the curvature at each point of the path.  Roughly
speaking, the Reinhardt problem is a modification of the Dubins
problem that imposes hexagonal symmetry and a steering wheel that
turns only to the left.  In both cases, curvature constraints force
the (conjectural) solution to consist of finitely many straight
segments and arcs of maximal curvature.  The relationship becomes more
than a visual similarity when the Dubins problem is formulated as a
left-invariant control problem on the group $SE(2)$ of orientation
preserving isometries of the plane or when extended to hyperbolic
space \cite{mittenhuber1998dubins}, \cite{monroy1998non}.

\tikzfig{dubins}{
This research was motivated by the visual similarity between solutions to
the Dubins car problem with its circular arcs (left) and smoothed polygons 
using hyperbolic arcs (right).
}{
\begin{scope}
\draw (-1,0) -- (1,0);
\smalldot{-1,0};
\smalldot{1,0};
\draw (2,-1) arc (0:90:1);
\draw (-1,0) arc (90:180:1);
\end{scope}
\begin{scope}[xshift=2in]
\draw (-1,0) -- (1,0);
\smalldot{-1,0};
\smalldot{1,0};
\draw plot[domain=0:1] ({\x+1},{-1+(-5 + 5*\x + 3*sqrt(9-10*\x + \x*\x))/4});
\draw plot[domain=0:1] ({-\x-1},{-1+(-5 + 5*\x + 3*sqrt(9-10*\x + \x*\x))/4});
\end{scope}
}

The main results of this article are Theorem \ref{thm:pmp}, which
asserts that the smoothed $6k+2$-gon is given by a Pontryagin extremal
trajectory;  Theorem \ref{thm:local-min}, which gives the strict
local optimality of the smoothed octagon; and Theorem \ref{thm:finite},
which proves that extremal trajectories that avoid the singular locus have
bang-bang controls with finitely many switches.  

The hyperbolic plane plays an important role in this article.
The connection with planar geometry comes through the group $\SL$,
which acts on the plane by affine transformations and on the hyperbolic
plane by isometries.

Many of the calculations are computer assisted, using Mathematica.
The computer code (about 1000 lines of source) has been posted to our
github repository (github.com/flyspeck).  Many explicit formulas that
are too long to print here can be found in the computer code that
accompanies this article.

It is with some regret that I publish this article prematurely before
completing a full solution to the Reinhardt conjecture.  I have not
encountered any obstacles to major further advances along these lines,
and I believe that optimal control theory should eventually lead
to a solution to the Reinhardt conjecture.  The final section proposes
possible end-games for this problem.

\subsection{review of earlier results}\label{sec:review}

We briefly review some of the main conclusions of
\cite{Reinhardt:1934} and \cite{hales2011reinhardt}.  A convex body in
Euclidean space is a compact convex set with nonempty interior.  A
{\it centrally symmetric} convex body $D$ is a convex body such that
$-D$ is a translate of $D$.  In this article, a disk means a convex
body in the plane and is not necessarily circular.  

Reinhardt proved the existence of a convex centrally symmetric disk
$\DR$ in the plane with the property that the density of its densest
lattice packing minimizes the density of the densest lattice packing
among all convex centrally symmetric disks in the plane.  The
Reinhardt problem is to determine the shape of $\DR$.  Reinhardt
conjectured that $\DR$ is a smoothed octagon.

The density is not changed by affine transformations of the plane.
Thus if $\DR$ is a solution to the Reinhardt problem, then every affine
transformation of $\DR$ is also a solution.

Reinhardt showed that $\DR$ has no corners; that is, every point on
the boundary of $\DR$ has a unique tangent.  He showed that the
densest lattice packing is obtained by placing $\DR$ in a centrally
symmetric hexagon of smallest area containing $\DR$, then tiling the
plane with copies of the hexagon.  Moreover, there is a centrally
symmetric hexagon of the same minimal area passing through each point
on the boundary of $\DR$.  These hexagons never degenerate to a
quadrilateral.  By rescaling, we may assume without loss of generality
that the centrally symmetric hexagons all have area $\sqrt{12}$ (the
area of the circumscribing hexagon of circle of radius $1$) and that
the centrally symmetric disks are centered at the origin.  This puts
a structure on $\DR$ that we call a {\it hexagonally symmetric}
disk.  (This was called a hexameral domain in
\cite{hales2011reinhardt}.)

Let $\ee{0},\ee{1},\ldots,\ee{5}\in\ring{R}^2$ be the vertices of a
regular hexagon on  a unit circle:
\[
\ee{j} = (\cos(2\pi j/6),\sin(2\pi j/6)).
\]
Let $\SL$ be the group of $2\times 2$ matrices with real coefficients
with determinant $1$, and let $\sl$ be its Lie algebra, consisting of
all $2\times 2$ matrices with real coefficients and trace $0$.  We
write $t_f$ for the free terminal time (to be determined as part of
the solution).  After centering $\DR$ at the origin, there exists a
continuously differentiable path $g:[0,t_f]\to \SL$ such that the
boundary of $\DR$ is given by the six arcs
\begin{equation}\label{eqn:sigma}
t\mapsto \sigma_j(t):= g(t) \ee{j}, \quad t \in [0,t_f].
\end{equation}
For each $t$, we may draw the tangents to the boundary of $\DR$ at the
six points $g(t) \ee{j}$, for $ j=0,\ldots,5$.  The six tangents form
the six edges of an area-minimizing centrally symmetric hexagon as
above.  The six points are the midpoints of the six edges of the
hexagon.

\begin{remark} We work with the group $\SL$, but because of the
  central symmetry, we have $\sigma_{j+3}=-\sigma_j$, and we might pass to the
  quotient $\op{PSL}_2(\ring{R})$ if desired.
\end{remark}

Each choice of path $g$ (subject to the convexity and endpoint
constraints of Section \ref{sec:upper-half}) leads to a centrally
symmetric disk that possesses such a family of area-minimizing
centrally symmetric hexagons.  We call a path $g$ in $\SL$ hexagonally
symmetric (see Section \ref{sec:upper-half}) if it satisfies the
convexity and endpoint conditions to define a convex centrally
symmetric disk $D(g)$ in the plane, with boundary arcs
(\ref{eqn:sigma}).  Each hexagonally symmetric disk $D$ (that is, a
centrally symmetric convex disk in the plane, centered at the origin)
has the form $D=D(g)$ for some $g:[0,t_f]\to\SL$.

With fixed area $\sqrt{12}$ for each hexagon, the Reinhardt problem
becomes equivalent to minimizing the area of a hexagonally-symmetric disk.  This
can be formulated as a problem in the calculus of variations, as was
done in an earlier article, but the convexity constraints on the disk
lead to a some awkwardness.  We turn to optimal control theory as a
natural framework for the Reinhardt problem.

\section{State}

\subsection{ODE}

We consider the following control problem.  
Let 
\[
U = \{(u_0,u_1,u_2)\in \ring{R}^3\mid \sum_i u_i = 1,\ u_i\ge 0\}
\]
be the set of controls, a $2$-simplex.
We define an affine map 
\[
Z_0: U\to \sl
\]
as the inverse of the linear map $\sl\to\ring{R}^3$, $Z\mapsto
\ee{2i}\wedge Z \ee{2i}$; that is given $u\in\ring{R}^3$, by solving
equations for $Z_0$:
\begin{equation}\label{eqn:u}
  \ee{2i} \wedge Z_0(u) \ee{2i} = u_i,\quad i = 0,1,2,
\end{equation}
where $v \wedge v'$ is the $2\times 2$ determinant with columns $v$
and $v'$.  (This system of linear equations for $Z_0$ is nonsingular.)
We refer to $Z_0(u)$ as the control matrix.

Let $g:[0,t_f]\mapsto \SL$ be a $C^1$ path 
We write
\begin{equation}\label{eqn:g}
g' = g X, \text{ with } X:[0,t_f]\to \sl,
\end{equation}
We use a prime throughout the article to indicate the derivative
with respect to $t$.
We assume that
\begin{equation}\label{eqn:unit}
\det(X(t))=1,\quad \text{ for all } t\in [0,t_f].
\end{equation}
We assume that $X:[0,t_f]\mapsto \sl$ is Lipschitz continuous and that
\begin{equation}\label{eqn:X'}
X' = X (\delta(u,X) Z_0(u) - X),
\end{equation}
where 
\begin{equation}\label{eqn:delta}
\delta = \delta(u,X) = -2/\op{trace}(Z_0 (u) X).
\end{equation}

As a rough guide our intuition, we can view the ODE
(\ref{eqn:u})--(\ref{eqn:delta}) as a Frenet-Serret type formula that
determines a planar curve up to congruence by its planar curvature.
In our setting, the control $u=(u_0,u_1,u_2)$ gives the planar
curvatures of the various branches $\sigma_{2i}$ of the hexagonally
symmetric curve up to a normalization factor that has been included to
make $U$ a standard simplex.  More precisely, the curvature is given
as
\begin{equation}\label{eqn:kappa}
\kappa_{2i} = \left(\frac{dt}{ds_{2i}}\right)^3 \delta(u,X) u_i,
\end{equation}
where $\kappa_{2i}$ is the curvature of the $2i$-th curve $t\mapsto
g(t) \ee{2i}$, and $s_{2i}$ is its arclength parameter.  The
non-negativity conditions $u_i\ge 0$ are the local convexity
conditions on the hexagonally symmetric curve.

\begin{remark}
  Under natural disk constraints on $X$ that can be made without
  loss of generality, we show in Section \ref{sec:star} that the
  denominator of Equation (\ref{eqn:delta}) is nonzero.
\end{remark}

\begin{theorem} Let $g:[0,t_f]\to \SL$ be related by (\ref{eqn:sigma}) to
the solution  of the Reinhardt problem: $\DR=D(g)$.
  After a suitable reparametrization, the path $g$ satisfies the the
  equations (\ref{eqn:u})--(\ref{eqn:delta}) for some measurable control
  $u:[0,t_f]\to U$.
\end{theorem}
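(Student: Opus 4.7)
The plan is to read off the pair $(X, u)$ directly from the given $C^1$ path $g$ and then verify the system (\ref{eqn:u})--(\ref{eqn:delta}) by direct computation. I set $X := g^{-1}g'$, a continuous $\sl$-valued function on $[0, t_f]$. To enforce (\ref{eqn:unit}) I reparametrize by $\tilde t = \phi(t)$ with $\phi'(t) = \sqrt{\det X(t)}$; the transformation law $\tilde X = X/\phi'$ then gives $\det \tilde X \equiv 1$. The positivity of $\det X$ that this requires follows from the area-$\sqrt{12}$ constraint: the six tangent directions $g(t) X(t) \ee{j}$ frame a nondegenerate centrally symmetric hexagon, whose area is a positive multiple of $\det X(t)$ (up to a parametrization-dependent factor), so $\det X$ cannot vanish.

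After reparametrization, $\op{tr}(X) = 0$ and $\det X = 1$ force $X^2 = -I$ by Cayley--Hamilton, so $X^{-1} = -X$. Rewriting (\ref{eqn:X'}) as $\delta Z_0(u) = X^{-1}X' + X =: W$, and using that $Z \mapsto (\ee{2i} \wedge Z\ee{2i})_i$ is a linear isomorphism $\sl \to \ring{R}^3$, I define
\[
\delta(t) := \sum_{i=0}^{2} \ee{2i} \wedge W(t)\ee{2i}, \qquad u_i(t) := \frac{\ee{2i} \wedge W(t)\ee{2i}}{\delta(t)}.
\]
Then $\sum_i u_i = 1$ by construction, $Z_0(u) = W/\delta$ by the defining property of $Z_0$, and (\ref{eqn:X'}) follows on left-multiplying $\delta Z_0(u) - X = X^{-1}X'$ by $X$. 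A short computation using $X^2 = -I$ and $\op{tr}(X') = 0$ gives $\op{tr}(WX) = -2$, which verifies (\ref{eqn:delta}). For the constraint $u(t) \in U$ I must check $u_i \ge 0$: the curvature derivation behind (\ref{eqn:kappa}) gives $\delta u_i = \kappa_{2i}\,|X\ee{2i}|^3$, so local convexity of $\DR$ ($\kappa_{2i} \ge 0$), together with $\sum_i u_i = 1$, forces both $\delta > 0$ and each $u_i \ge 0$.

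The main obstacle is regularity. The existing analysis of $\DR$ gives only $g \in C^1$, whereas the ODE demands that $X$ be Lipschitz so that $X'$ exists a.e.\ and the right-hand side of (\ref{eqn:X'}) is defined almost everywhere. I plan to supply the missing smoothness via the implicit characterization of $X'$ coming from the area-minimizing-hexagon structure: the six tangency conditions at the midpoints $g(t)\ee{j}$ together with the constant-area constraint locally pin $X'$ down as a Lipschitz function of $X$ wherever $\delta > 0$. A subsidiary concern is the non-vanishing of $\delta$; geometrically $\delta = 0$ would require the three branch curvatures to vanish simultaneously at a single instant, which can be excluded for the Reinhardt minimizer by a comparison argument against any smoothed-polygon competitor (whose density is strictly smaller than that of any disk with a coincident flat-everywhere configuration). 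Once the Lipschitz property of $X$ is established and $\delta > 0$ is secured, the measurability of $u$ is immediate from its defining formula.
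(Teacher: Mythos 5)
Your proposal follows essentially the same route as the paper: define $X=g^{-1}g'$, reparametrize to unit speed, form the curvature matrix $W=X+X^{-1}X'$ (the paper's $Z$), use the linear isomorphism $Z\mapsto(\ee{2i}\wedge Z\ee{2i})_i$ to extract $\delta$ and $u$, and fix $\delta$ by the trace identity $\op{trace}(WX)=-2$; this algebraic core is correct and matches the paper's proof exactly. The three facts you flag as obstacles --- Lipschitz continuity of $X$, positivity of $\det X$, and the non-negativity $\ee{2j}\wedge W\ee{2j}=\delta u_j\ge 0$ with $\delta>0$ --- are precisely what the paper imports by citation from \cite[\S3.3, \S3.5, Eqn.~19]{hales2011reinhardt}, so your ``main obstacle'' is resolved there rather than needing the speculative implicit-function and comparison arguments you sketch.
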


\begin{proof}
  We briefly indicate why an optimal solution to the Reinhardt problem
  gives a trajectory of this state equation.  The path $g$ is
  continuously differentiable.  Define $X:[0,t_f]\to\sl$ by $g' = g
  X$.  By \cite[\S3.3]{hales2011reinhardt}, $X$ is Lipschitz
  continuous, so that $X$ is differentiable almost everywhere.  By
  \cite[\S3.5]{hales2011reinhardt}, $\det(X)>0$.  The cost (that is, the
 area of a disk $D$) described in
  Section
  \ref{sec:cost} is invariant under reparametrizations of the path
  $g$.  By appropriate choice of time parameter for the path $g$, we
  may assume that $g$ has unit speed in the sense that Equation
  (\ref{eqn:unit}) holds.  Define a ``curvature matrix''
  $Z:[0,t_f]\mapsto\op{gl}_2(\ring{R})$
\begin{equation}\label{eqn:Z}
Z = X + X^{-1}X'.
\end{equation}
By Lemma \ref{lemma:trace0}, $Z$ takes values in $\sl$.  By
\cite[Eqn.19]{hales2011reinhardt}, we have
\begin{equation}
\ee{2j} \wedge Z \ee{2j} = \delta u_j \ge 0,
\end{equation}
for some $\delta>0$ and some measurable $u:[0,t_f]\to U$.  Define
$Z_0=Z_0(u)$ by Equation (\ref{eqn:u}), so that $Z = \delta Z_0$.  Solving
Equation (\ref{eqn:Z}) for $X'$, we obtain the differential equation
(\ref{eqn:X'}).  The scalar $\delta$ is uniquely determined by the
condition that $\op{trace}(X')=0$:
\begin{align*}
0 &= \op{trace}(X') = 
\op{trace}(X \delta Z_0-X^2) = \delta\op{trace}(X Z_0) +2.
\end{align*}
\end{proof}

\begin{lemma}\label{lemma:trace0} 
  If $X:[0,t_f]\to \sl$ is a Lipschitz path such that Equation
  (\ref{eqn:unit}) holds, then
\[
\op{trace}(X + X^{-1} X') = 0.
\]
\end{lemma}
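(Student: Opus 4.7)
The plan is to decompose $\operatorname{trace}(X + X^{-1}X')$ as $\operatorname{trace}(X) + \operatorname{trace}(X^{-1}X')$ and observe that each summand vanishes for a separate reason. The first term is zero because $X(t) \in \mathfrak{sl}_2(\mathbb{R})$ by hypothesis, which is the trace-zero Lie algebra. So the entire content of the lemma is the vanishing of $\operatorname{trace}(X^{-1}X')$.

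For the second term, the cleanest route is Jacobi's formula: for any differentiable invertible matrix-valued function $X(t)$,
\[
\frac{d}{dt}\det(X(t)) = \det(X(t))\,\operatorname{trace}\bigl(X(t)^{-1}X'(t)\bigr).
\]
By (\ref{eqn:unit}), $\det(X(t))\equiv 1$, so the left-hand side is identically zero (almost everywhere, in the weak/a.e. sense appropriate to a Lipschitz path, using Rademacher to guarantee that $X'$ exists a.e.). Dividing by $\det(X(t))=1$ gives $\operatorname{trace}(X^{-1}X')=0$ a.e., which together with $\operatorname{trace}(X)=0$ yields the claim.

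Alternatively, and perhaps more elementarily in the $2\times 2$ setting, one can invoke Cayley--Hamilton: any $X \in \mathfrak{sl}_2(\ring{R})$ satisfies $X^2 + \det(X)\,I = 0$, and under (\ref{eqn:unit}) this becomes $X^2 = -I$, so $X^{-1} = -X$. Differentiating the constant identity $X^2 = -I$ gives $X'X + XX' = 0$, whence $\operatorname{trace}(XX')=\tfrac12\operatorname{trace}(X'X+XX')=0$ by cyclicity, so $\operatorname{trace}(X^{-1}X')=-\operatorname{trace}(XX')=0$.

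There is no real obstacle; the only mild subtlety is regularity. Since $X$ is merely Lipschitz, $X'$ exists only almost everywhere, so the conclusion $\operatorname{trace}(X+X^{-1}X')=0$ should be interpreted in the a.e. sense, which is exactly what is needed for the ODE (\ref{eqn:X'}) in the preceding theorem. I would state this regularity remark briefly and then give whichever of the two arguments is shorter.
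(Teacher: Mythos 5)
Your proposal is correct, and your second ("alternative") argument via Cayley--Hamilton is precisely the paper's proof: compute the characteristic polynomial $\lambda^2+1$, deduce $X^2=-I$ and $X^{-1}=-X$, differentiate $X^2=-I$ to get $XX'+X'X=0$, and conclude $\operatorname{trace}(X+X^{-1}X')=-\operatorname{trace}(XX')=0$. Your primary route via Jacobi's formula, $\frac{d}{dt}\det X=\det X\cdot\operatorname{trace}(X^{-1}X')$, is a genuinely different and slightly more general argument (it works for $n\times n$ matrices and does not use $\operatorname{trace}(X)=0$ for the second summand), at the cost of invoking a less elementary identity; the Cayley--Hamilton route buys the useful byproduct $X^{-1}=-X$, which is implicitly exploited elsewhere in the setup. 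Your remark that the identity holds only almost everywhere for a Lipschitz path (via Rademacher) is a point the paper glosses over but is exactly the right interpretation, since $X'$ is only defined a.e.\ in the state equation anyway.
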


\begin{proof} 
The characteristic polynomial of $X$ is
\[
\lambda^2 - \op{trace}(X)\lambda + \det(X) = \lambda^2+1.
\]
By Cayley-Hamilton, $X^2=-I$ and $X^{-1}=-X$.  
Differentiation gives $X X' + X' X=0$.
Then
\[
0 = \op{trace}(-X X') = \op{trace}(X + X^{-1} X').
\]
\end{proof}

\subsection{Poincar\'e upper half-plane}\label{sec:upper-half}

If $X$ is any matrix, we write
 $c_{ij}(X)$ for the $ij$ matrix coefficient of $X$.
In particular, we have linear functions $c_{ij}:\sl\to\ring{R}$:
\[
X = \begin{pmatrix}c_{11}(X)& c_{12}(X) \\ 
c_{21}(X) & - c_{11}(X)\end{pmatrix}.
\]
We say that $X\in\sl$ is positively oriented if $c_{21}(X) >0$.
 By \cite[\S3.5]{hales2011reinhardt}, a solution $X(t)$ to the Reinhardt
problem has positive orientation for each $t$.  
(This corresponds to a counterclockwise
traversal of the boundary of $\DR$.)

Set
\[
J = \begin{pmatrix} 0 & -1 \\ 1 & 0\end{pmatrix} \in \sl.
\]
We have rotation matrices
\[
\exp(J t) = \begin{pmatrix} \cos t & -\sin t\\ \sin t & \cos t\end{pmatrix}.
\]

\begin{lemma} 
The set of matrices $X\in\sl$ such
  that $\det(X)=1$, $\op{trace}(X)=0$ and $c_{21}(X)>0$ is the adjoint orbit
  of $J$.
\end{lemma}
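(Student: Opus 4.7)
The plan is a direct matrix calculation. Since $J$ itself satisfies $\det J = 1$, $\op{trace}(J) = 0$, and $c_{21}(J) = 1 > 0$, and since conjugation by any $g\in\SL$ preserves determinant, trace, and membership in $\sl$, the only remaining invariance to check is that $c_{21}(gJg^{-1})>0$. Writing $g = \begin{pmatrix}p & q\\ r & s\end{pmatrix}$ with $ps-qr=1$ and expanding directly gives $c_{21}(gJg^{-1}) = r^2 + s^2$, which is strictly positive because $r=s=0$ is incompatible with $\det g = 1$. Hence the adjoint orbit of $J$ is contained in the given set.

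For the reverse inclusion, I would show transitivity of the adjoint action by an explicit construction. Let $X = \begin{pmatrix}a & b\\ c & -a\end{pmatrix}$ lie in the set, so $c>0$ and $\det X = -a^2-bc = 1$, forcing $b = -(1+a^2)/c$. I would exhibit the upper-triangular element
\[
g = \begin{pmatrix} 1/\sqrt{c} & a/\sqrt{c} \\ 0 & \sqrt{c} \end{pmatrix} \in \SL,
\]
and verify $gJg^{-1}=X$ by a single matrix multiplication: the $(2,1)$-entry is $r^2+s^2 = c$, the $(1,1)$-entry is $pr+qs = a$, and the $(1,2)$-entry is $-(p^2+q^2) = -(1+a^2)/c = b$.

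This is essentially the Iwasawa-decomposition picture that Section \ref{sec:upper-half} hints at: the orbit is $\SL/\SO$, identified with the Poincar\'e upper half-plane via $X\mapsto(-c_{11}(X)+i)/c_{21}(X)$, sending $J$ to $i$, under which the above $g$ is the standard ``$ax+b$'' translation carrying $i$ to $(-a+i)/c$. I anticipate no serious obstacle; the entire argument reduces to the routine expansion of $gJg^{-1}$ and the identification $b = -(1+a^2)/c$ coming from $\det X = 1$.
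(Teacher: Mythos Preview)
Your proof is correct and essentially follows the paper's approach: both exhibit an explicit upper-triangular conjugator (your $g$ is the paper's $\hat z(x,y)=\begin{pmatrix}y&x\\0&1\end{pmatrix}$ normalized to determinant~$1$), with the paper invoking the Iwasawa decomposition where you instead compute $c_{21}(gJg^{-1})=r^2+s^2$ directly. One small slip in your closing commentary, not the proof itself: the paper's half-plane identification is $X\mapsto (c_{11}(X)+i)/c_{21}(X)$ without the minus sign, and correspondingly your $g$ carries $i$ to $(a+i)/c$ under the linear-fractional action.
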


\begin{proof}
  Let $y = 1/c_{21}$ and $x = c_{11}/c_{21}$.  Then $X$ has the form
\begin{equation}\label{eqn:orbit}
X = X(x,y)=
 \begin{pmatrix} x/y & -x^2/y - y \\ 1/y & -x / y\end{pmatrix} 
= \hat z J \hat z^{-1},
\quad \text{where} \quad \hat z = \hat z(x,y) = 
\begin{pmatrix} y & x \\ 0 & 1\end{pmatrix}.
\end{equation}
The centralizer of $J$ in $\SL$ is $\SO$.  By the
Iwasawa decomposition, the orbit of $J$ under $\SL$ is the same as the
orbit under the upper triangular matrices $\hat z(x,y)$ with positive
determinant.  The result follows.  (We remark that the condition $c_{21}>0$
picks out a conjugacy class within the stable semisimple conjugacy class
determined by the characteristic polynomial $\lambda^2 + 1$ of $X$.)
\end{proof}

The adjoint orbit of $J$ can be identified with the homogeneous space
$\SL/\SO$, which can be identified with the upper
half-plane $\h$.  This identification comes via $\hat z= z(x,y)$ as above:
\begin{equation}\label{eqn:X-h}
\hat z J \hat z^{-1} \mapsto  x + i y\in\h,\quad y>0,\quad i = \sqrt{-1}.
\end{equation}

A calculation shows that the ODE in Equation (\ref{eqn:X'}) expressed
in terms of coordinates $x,y$ is
\begin{align}\label{eqn:ode-state-upper}
\begin{split}
x' &= f_1(x,y;u):=
\frac{y (b + 2 a x - c x^2 + c y^2)}{b + 2 a x - c x^2 - c y^2}\\
y' &= f_2(x,y;u):=
\frac{2 (a - c x) y^2}{b + 2 a x - c x^2 - c y^2}.
\end{split}
\end{align}
The dependence on the control $u=(u_0,u_1,u_2)\in U$ comes through the
coefficients $a,b,c$.  Specifically, $Z_0 = Z_0(u)=\begin{pmatrix} a &
  b \\ c & -a\end{pmatrix}$ is the control matrix where
\begin{align*}
a &= c_{11}(Z_0) = \frac{u_1-u_2}{\sqrt{3}},\\
b &= c_{12}(Z_0) = \frac{u_0}{3} - \frac{2u_1}{3} - \frac{2 u_2}{3},\\
c &= c_{21}(Z_0) = u_0.
\end{align*}

In summary,
we have state equations:
\begin{align}\label{eqn:state}
\begin{split}
x' &= f_1(x,y;u),\\
y' &= f_2(x,y;u),\\
g' & = g X,\\
x&:[0,t_f]\to\ring{R};\quad y:[0,t_f]\to\ring{R};\quad g:[0,t_f]\to\SL;
\end{split}
\end{align}
where  $X = \hat z(x,y) J \hat z(x,y)^{-1}$, and
where $u:[0,t_f]\to U$ is a measurable control.

We define an {\it admissible} trajectory $(g,z):[0,t_f]\to M:=
\SL\times\h$ to be a solution of this ODE for some measurable control
$u$, with $z=x+iy$, and that satisfies the following additional
conditions:
\begin{enumerate}
\item The image of $z$ lies in $\hstar\subset \h$ (see Section
  \ref{sec:star}).
  \item The endpoints of the trajectory are $(g(0),z(0))=(I,z_0)$ and
    $(g(t_f),z(t_f))=(R,R^{-1}.z_0)$, for some $z_0\in \h$, where $R:=
    \exp(J\pi/3)$.  
\item the path
  $g:[0,t_f]\to\SL$ is homotopic in $\SL$ to the path
  given by rotation:
\[
t\mapsto \exp(J \pi t/(3 t_f)),\quad t\in [0,t_f].
\]
\end{enumerate}
To each admissible trajectory we may associate a hexagonally symmetric
disk $D(g,z)$.  The second condition enforces that the union of the
boundary arcs (Equation \ref{eqn:sigma}) of $D(g,z)$ is a closed curve
with no corners.  The third condition enforces the condition that the
boundary arcs must define a simple closed curve traversed in the
counterclockwise direction.

The path $g$ determines $z$ by the equations (\ref{eqn:g}) and
(\ref{eqn:orbit}).  Conversely $z$ determines $g$ by the same
equations and the initial condition
\[
g(0)=I.
\]
Thus, we sometimes abbreviate the admissible trajectory $(g,z)$ to $g$
or $z$, and write the corresponding hexagonally symmetric disk
$D(g,z)$ as $D(g)$ or $D(z)$.

The condition $g(0)=I$ can be imposed without loss of generality.  
The group of affine transformations of the plane acts on the set of 
solutions to the Reinhardt problem.  We have reduced the affine
group of symmetries by fixing the center of $\DR$ at the origin, and
the fixing the area $\sqrt{12}$ of the hexagon tile.  This leaves the
group action of $\SL$ on the set of solutions to the Reinhardt
problem, which we rigidify with the initial condition $g(0)=I$.

We call a {\it link} the full segment (between switching times) of a
trajectory that has a constant control at a vertex of the
simplex $U$: 
\[
u\in\{e_1,e_2,e_3\}\subset U.
\]
where $e_1=(1,0,0)$, $e_2=(0,1,0)$, $e_3=(0,0,1)\in U$.
See Sections \ref{sec:e2e3} and \ref{sec:e1}.

\subsection{cost functional}\label{sec:cost}

The cost functional in \cite[\S5.1]{hales2011reinhardt} (correcting
the formula there with a missing factor of $2$) is
\begin{equation}\label{eqn:cost0}
g\mapsto \op{area}(D(g))= -\frac{3}{2}\int_0^{t_f} \op{trace} (J g^{-1} g') dt \to\min.
\end{equation}
The interpretation of the cost is the area of the hexagonally
symmetric disk $D(g)$.  Using $g' = g X$, the cost (\ref{eqn:cost0})
depends only on $X$ and simplifies to
\begin{equation}\label{eqn:cost-JX}
 -\frac{3}{2}\int_0^{t_f} 
 \op{trace}(J X) dt  \to \min.
\end{equation}
Assume now that $X$ has unit speed.
Expressed in terms of coordinates $x + i y$ in the Poincar\'e upper
half-plane, the cost takes the form
\begin{equation}\label{eqn:cost-upper}
\frac{3}{2} \int_0^{t_f} \frac{x^2 + y^2 + 1}{y} dt\to \min.
\end{equation}
This cost is rotationally symmetric with respect to the action of
$\SO\le \SL$ on the upper half-plane.  In fact, the
level sets of $(x^2 + y^2 + 1)/y$ are concentric circles (centered at
$i$ in hyperbolic geometry).  The cost satisfies
\begin{equation}\label{eqn:cost-min}
(x^2 + y^2 + 1)/y \ge y + 1/y \ge 2,
\end{equation}
attaining its minimum at $i = \sqrt{-1}\in \h$.

We may also express the cost in the Poincar\'e disk model $\D$.  Let
\[
\D=\{w\in\ring{C} \mid |w| < 1\},\quad 
w = \frac{z - i}{z + i},\quad z = \frac{-i(1+w)}{-1+w},\quad z\in\h
\]
The cost of a
path $w:[0,t_f]\to \D$ in the Poincar\'e disk becomes
\begin{equation}\label{eqn:cost-disk}
3\int_0^{t_f} \frac{1 + |w|^2}{1 - |w|^2} dt\to\min.
\end{equation}
In this model, the rotational symmetry about $0$ is evident.

\begin{remark}
  One model of hyperbolic geometry is the upper sheet of a hyperboloid
  of two sheets.  We recognize $(1+|w|^2)/(1 - |w|^2)$ as the height
  on the hyperboloid \[ \{(u,v,h)\in \ring{R}^3\mid h^2 = 1 + u^2 +
  v^2,\ h>0\}.
 \]
 In more detail, we map a point $w=(u,v,0)\in\ring{R}^3$ in the unit
 disk $\D\subset \ring{R}^3$ to the point $p$ in the upper sheet whenever $w$, $p$, and
 $(0,0,-1)$ are collinear (Figure \ref{fig:hyperboloid}).  It is a
 curiosity that the area of a convex disk in the Euclidean plane in
 Reinhardt's packing problem equals the integral of the height
 function in hyperbolic geometry.
\end{remark}

\tikzfig{hyperboloid}{ The cost is the integral of the height $h$ in
  the hyperboloid model of hyperbolic geometry.  The line through
  $(0,-1)$ and $(|w|,0)$ meets the hyperbola $h^2 = 1 + x^2$ at a
  point $(x,h)$ with height $h= (1+|w|^2)/(1-|w|^2)$.  }{
\begin{scope}
\draw (0,-1) -- (0.5,0) -- (4/3,5/3);
\smalldot{0,-1};
\smalldot{4/3,5/3};
\draw[fill=black] (0.5,0)
 node [inner sep=1.7pt,shape=circle,fill=black] {};
\draw (0.5,0) node [anchor=north west] {$(|w|,0)$};
\draw (0,-1) node [anchor=west] {$(0,-1)$};
\draw (4/3,5/3) node [anchor=north west] {$p=(x,h)$};
\draw[gray,<->] (-2,0)--  (2,0) node[anchor=north west,black] {$x$};
\draw[gray,<->] (0,-2)--(0,2) node[anchor=south east,black] {$h$};
\draw[very thick] (-1,0)--(1,0);
\draw plot[domain=-1.4:1.4] ({(exp(\x)-exp(-\x))/2},{(exp(\x)+exp(-\x))/2.0});
\end{scope}
}

\subsection{star inequalities}\label{sec:star}

We define the star inequalities on $\sl$ to be the following:
\[
\sqrt{3}|c_{11}(X)| < c_{21}(X),\quad 3 c_{12}(X) + c_{21}(X) < 0.
\]
(These conditions were obtained in \cite[\S3.5]{hales2011reinhardt}
for the tangent $X$ at $t=0$, but also hold for all $t$, by symmetry.)
There is no loss of generality in imposing these conditions at each
point of a trajectory; they are necessary conditions for the convexity
of the corresponding hexagonally symmetric disk (Remark
\ref{rem:star}).  Translating into the upper-half plane coordinates,
the star inequalities define an open region:
\begin{equation}\label{eqn:star}
\hstar:= \{(x,y)\in\h\mid 
-\frac{1}{\sqrt{3}} < x < \frac{1}{\sqrt{3}},\quad \frac{1}{3} < x^2 + y^2\}.
\end{equation}
The inequalities define the interior of an ideal hyperbolic triangle
with vertices at $z = \pm 1/\sqrt{3}$ and $z=\infty$ on the boundary
of $\h$.  We set $\Mstar = \SL\times\hstar$.  In disk coordinates, the
star inequalities imply that $w$ lies in the interior of the ideal
hyperbolic triangle with vertices $w=1,\zeta,\zeta^2$ on the boundary
of $\D$, where $\zeta = e^{2\pi i/3}$ (Figure \ref{fig:ideal}).

\tikzfig{ideal}{The star inequalities define an ideal triangle in
  gray, shown here in the upper half-plane and disk models of
  hyperbolic geometry.}  {
\def\rt{1.732}
\begin{scope}[xshift=0in,yshift=0in]
\shade[top color=white,bottom color=gray] (-1/\rt,0) rectangle (1/\rt,3cm);
\draw (-1/\rt,0)--(-1/\rt,3);
\draw (1/\rt,0)--(1/\rt,3);
\begin{scope}
\clip (-1/\rt,0) rectangle (1/\rt,3cm);
\draw[fill=white] (0,0) circle (0.577cm);
\end{scope}
\draw (-2,0) -- (2,0);
\end{scope}
\begin{scope}[xshift=2in,yshift=1.5cm]
\draw[fill=gray] (0,0) circle (1cm);
\clip (0,0) circle (1cm);
\draw[fill=white] (-2,0) circle (\rt cm);
\draw[fill=white] (1,\rt) circle (\rt cm);
\draw[fill=white] (1,-\rt) circle (\rt cm);
\end{scope}
}

\begin{remark}\label{rem:star}
  Every hexagonally symmetric disk that passes through the six points
  $\ee{i}$, for $i=0,\ldots,5$ must be contained in the six triangular
  petals of the hexagram (Figure \ref{fig:star}).  The star
  inequalities can be interpreted geometrically as asserting that the
  tangent vectors at $\ee{i}$ are nonzero and point into the open
  cones over the six triangular petals.  (When some tangent vector at
  some $\ee{i}$ points into the boundary of the open cone, the centrally
  symmetric hexagon $H$ degenerates to a parallelogram, which is never
  optimal.)
\end{remark}

\tikzfig{star}{After an affine transformation, every hexagonally
  symmetric disk remains confined to the six petals of a hexagram on
  the left. The six tangents to the disk are confined to the open
  cones over the six petals. These are the star inequalities.}
{
\def\rt{1.732}
\def\R{2}
\begin{scope}
\draw[blue] (0,-\rt) -- (1.5,\rt/2) -- (-1.5,\rt/2) -- cycle;
\draw[blue] (0,\rt)-- (-1.5,-\rt/2) -- (1.5,-\rt/2) --cycle;
\draw (0,0) circle (1cm);
\end{scope}
\begin{scope}[xshift=2in]
\shade[left color=blue!30,right color=white] (0.5,-\rt/2) -- ++(0:2) -- ++ (120:2) --cycle;
\shade[bottom color=blue!30,top color = white] (1,0) -- ++ (60:2) -- ++(180:2) -- cycle;
\shade[right color=blue!30,left color = white] (0.5,\rt/2) -- ++ (180:2) -- ++(60:2) -- cycle;
\shade[right color=blue!30,left color = white] (-0.5,\rt/2) -- ++ (180:2) -- ++(-60:2) -- cycle;
\shade[top color=blue!30,bottom color = white] (-1,0) -- ++ (-120:2) -- ++(0:2) -- cycle;
\shade[left color=blue!30,right color = white] (-0.5,-\rt/2) -- ++ (0:2) -- ++(-120:2) -- cycle;
\draw[blue] (-60:1) -- ++ (1*60:3);
\draw[blue] (0:1) -- ++ (2*60:3);
\draw[blue] (60:1) -- ++ (3*60:3);
\draw[blue] (2*60:1) -- ++ (4*60:3);
\draw[blue] (3*60:1) -- ++ (5*60:3);
\draw[blue] (4*60:1) -- ++ (6*60:3);
\draw[->] (1,0)-- +(90:\R);
\draw[->] (0.5,\rt/2) -- +(150:\R);
\draw[->] (-0.5,\rt/2) -- +(210:\R);
\draw[->] (-1,0) -- +(270:\R);
\draw[->] (-0.5,-\rt/2) -- +(-30:\R);
\draw[->] (+0.5,-\rt/2) -- +(30:\R);
\draw[gray] (0,0) circle (1cm);
\end{scope}
}

\begin{lemma}\label{lemma:denneg} 
  Let $Z_0(u)\in\sl$ be the control matrix of $u\in U$.  If $X\in \sl$
  satisfies the star inequalities, then $\op{trace}(Z_0(u) X)<0$.
\end{lemma}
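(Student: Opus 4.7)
The plan is to exploit the fact that $u \mapsto \op{trace}(Z_0(u) X)$ is an affine (in fact linear homogeneous in $(u_0,u_1,u_2)$) function on the simplex $U$. Since an affine function on a simplex attains its maximum at a vertex, it suffices to verify the inequality at the three vertices $e_0=(1,0,0)$, $e_1=(0,1,0)$, $e_2=(0,0,1)$ of $U$. At each vertex the resulting expression will turn out to match one of the star inequalities (up to a positive multiplicative constant), so the proof becomes a short bookkeeping exercise.

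Concretely, write $X=\begin{pmatrix}\alpha & \beta\\ \gamma & -\alpha\end{pmatrix}$, so the star inequalities read $\sqrt{3}|\alpha|<\gamma$ and $3\beta+\gamma<0$. Using the explicit formulas $a=(u_1-u_2)/\sqrt{3}$, $b=(u_0-2u_1-2u_2)/3$, $c=u_0$ for the entries of $Z_0(u)$, a direct computation of the trace of the product yields
\[
\op{trace}(Z_0(u) X) \;=\; 2a\alpha + b\gamma + c\beta.
\]
Evaluating at the three vertices: at $u=e_0$ one gets $(3\beta+\gamma)/3$, which is negative by the second star inequality; at $u=e_1$ one gets $(2/3)(\sqrt{3}\alpha-\gamma)$, which is negative because $\sqrt{3}\alpha\le \sqrt{3}|\alpha|<\gamma$; at $u=e_2$ one gets $-(2/3)(\sqrt{3}\alpha+\gamma)$, which is negative because $-\sqrt{3}\alpha\le \sqrt{3}|\alpha|<\gamma$.

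Since $U$ is the convex hull of $\{e_0,e_1,e_2\}$ and the function $u\mapsto \op{trace}(Z_0(u)X)$ is linear (the map $Z_0$ is affine and the trace is linear in its argument), its supremum over $U$ equals the maximum of its values at the three vertices, each of which has just been shown to be strictly negative. The conclusion follows. I do not expect any serious obstacle here; the only substantive step is the identification of the three vertex evaluations with the two star inequalities, which is really just the observation motivating the star inequalities in the first place.
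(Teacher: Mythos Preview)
Your proof is correct and follows essentially the same approach as the paper: reduce to the vertices by affinity and identify the three vertex values with the star inequalities; you simply make the computation explicit where the paper leaves it as a one-line remark. One cosmetic point: the paper labels the vertices of $U$ as $e_1,e_2,e_3$ rather than $e_0,e_1,e_2$, so you may wish to align the notation.
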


\begin{proof} The control simplex $U$ is convex, and $Z_0(U)\subset\sl$
  is an affine image of the control simplex.  Thus, the image $Z_0(U)$
  is a convex set. It is enough to check that $\op{trace}(Z_0(u) X)<0$
  at the three vertices of the control simplex.  The conditions at
  vertices are precisely the star inequalities on $X$.
\end{proof}

\section{Costate}

\subsection{Filippov's lemma}\label{sec:fil}

The existence of lifted trajectories in the cotangent bundle (as
discussed later in this section) is generally based on Filippov's
compactness lemma \cite[Th.10.1]{agrachev2013control},
\cite[\S4.5]{liberzon2012calculus}.  In this subsection, we show that
the assumptions of Filippov's lemma are fulfilled.  Filippov's lemma
requires (1) that the control set $U$ is compact, which is certainly
true in our situation.

Filippov's lemma requires (2) that for each $x+i y\in \hstar$ 
 the velocity set (see Equation
\ref{eqn:ode-state-upper})
\[
\{f(x,y;u)=(f_1(x,y;u),f_2(x,y;u))\in \ring{R}^2\mid u\in U\}
\]
is convex.  We prove that the velocity set is in fact the
convex hull of 
\[
\{ f(x,y;e_k)\mid k=1,2,3\}.
\]
Fix $x+i y\in \hstar$ and pick two vertices $e_i,e_j\in U$.  
By explicit calculation, the two vertices map to distinct points
in the velocity set.
Let $L:\ring{R}^2\to \ring{R}$ be the nonzero affine function that
vanishes at $f(x,y;e_i)$ and $f(x,y;e_j)$.  From the explicit form of
Equation \ref{eqn:ode-state-upper}, we have
\[
L(f(x,y;u)) = \frac{\ell_1( u)}{\ell_2(u)},
\]
for some affine functions $\ell_1,\ell_2:U\to \ring{R}$ (depending on
$x,y$), where $\ell_2(u)$ is nonvanishing (and fixed sign) on $U$.  By
direct calculation,
we obtain $\ell_1(u)=0$ along the segment $[e_i,e_j]\subset U$ and
that $\ell_1$ has fixed sign on $U$.  We conclude that the velocity
set is a  convex hull as claimed.

Finally, Filippov's lemma requires (3) the compact support in
$(x,y)$ of the velocity sets.  This is a serious issue in our setting
because the star inequalities are open conditions and the vector
fields become unbounded near the boundary.  

By Reinhardt, an optimal centrally symmetric $\DR$ exists and its
boundary has no corner.  The corresponding unit-speed trajectory
$z:[0,t_f]\to\h$ remains in the interior of some compact set $K\subset
\hstar$.  A standard argument using a smooth compactly-supported
support function $f:\hstar\to \ring{R}$ with $f|_K = 1$ allows us to
replace each vector field $F$ on $\h$ with a vector field $f F$ of
compact support \cite[Remark 10.5]{agrachev2013control}.  Thus, by
choosing a suitable support function, we may assume that all three of
Filippov's assumptions hold.  Moreover, if desired, we can exhaust
$\hstar$ by a sequence of compact sets $K$ whose union is $\hstar$.

Pontryagin's conditions, which are discussed below, are
local around the trajectory $z$, and so are not affected by the
support function.

\subsection{Hamiltonian}\label{sec:ham}

We use the formulation of the Hamiltonian for invariant problems on a
Lie group from \cite[Ch.18]{agrachev2013control}.  We
use invariant vector fields to trivialize the tangent bundle of $\SL$
and use an invariant inner product to identify the cotangent space
with the tangent space.  We fix the invariant inner product $\ang{A,B}
= \op{trace}(AB)$ on $\sl$.  In this formulation, according to the standard
definitions,  the optimal control
problem has a Hamiltonian
\begin{align}\label{eqn:ham}
\begin{split}
H(\lambda;u)&:=
\Hlie(\Lambda,X) + H_{\h}(\nu,x,y;u), 
~\text{where}\\
\Hlie(\Lambda,X) &:= 
\ang{\Lambda,X} - \frac{3}{2} \lambda_{cost} \ang{J,X},\\
H_{\h}(\nu,x,y,u)&:= 
\nu_1 f_1(x,y;u) + \nu_2 f_2(x,y;u),\\
\end{split}
\end{align}
 for
costate variables $\lambda_{cost}\in\ring{R}$, $\Lambda\in \sl$, 
$\nu=(\nu_1,\nu_2)\in\ring{R}^2$,
and where $X = \hat z J \hat z^{-1}$, $\hat z = \hat z(x,y)$.  We
have broken the Hamiltonian into two terms: $H_{\h}$ coming from the
upper-half plane, and $\Hlie$ coming from the Lie algebra and cost
functional combined.

The state space $\Mstar:=\SL\times \hstar$ is five-dimensional, and
  the costate space $\sl\times\ring{R}^2$ is five-dimensional,
viewed as the
cotangent space of $\Mstar$ at a point under the trivialization of the
cotangent bundle.  We write $T^*M$ for the cotangent bundle of $M$,
identified with
\[
T^*M\times T^*\h = (\SL\times\sl)\times (\h\times\ring{R}^2)
\ni((g,\Lambda),(z,\nu)).
\]
We write $T_I^*M$ for the subspace of $T^*M$ on which the $\SL$
component is $g=I$.

\subsection{Pontryagin maximum principle}\label{sec:pmp}

Specialized to our setting,
the conditions of Pontryagin maximum principle (PMP) for
our optimal control problem with free terminal time $t_f$ are
the following:
\begin{enumerate}
\item The trajectory $(g,z)$ satisfies the ODE
  (\ref{eqn:state}) for some measurable control $u:[0,t_f]\to U$.
\item The  Hamiltonian $H(\lambda,u)$ vanishes identically along the
  lifted controlled trajectory $(\lambda,u)$.  
\item The lifted trajectory $\lambda:[0,t_f]\to T^*M$ is Lipschitz continuous
  and satisfies the following ODE:
\begin{equation}\label{eqn:adjoint}
\begin{split}
\Lambda' &= [\Lambda,X],\\
\nu_1' &= -\partials{H^+}{x},\\
\nu_2' &= -\partials {H^+}{y},
\end{split}
\end{equation}
Here $H^+$ is the
  pointwise maximum over the control simplex $U$:
\begin{equation}\label{eqn:pmp-max}
H^+(\lambda_{cost},\lambda)=
\max_{u\in U} H(\lambda_{cost},\lambda;u),\quad 
(\lambda_{cost},\lambda)\in\ring{R}\times T^*\Mstar.
\end{equation}
\item The projectivized covector is well-defined: for each $t$, the
  vector $(\lambda_{cost},\lambda(t))\in\ring{R}\times T^*M$ is nonzero.
\item $\lambda_{cost}$ is constant and $\lambda_{cost}\le 0$.
\item Transversality holds at the endpoints (as described in
  Section \ref{sec:trans}).
\end{enumerate}

By a lifted trajectory $\lambda:[0,t_f]\to T^*M$ of an admissible
trajectory $(g,z)$ we mean a solution of the ODE (\ref{eqn:adjoint})
such that the image $(g_\lambda,z_\lambda)$ of $\lambda$ in $M$ is
$(g,z)$.

A lifted trajectory satisfying the PMP conditions is called a
Pontryagin extremal trajectory.  The PMP gives necessary but not
sufficient conditions for local optimality.

Because $\lambda_{cost}\le 0$ is a constant, and since the PMP
conditions are invariant under rescaling the costate by a positive
scalar, we may take $\lambda_{cost}=0$ (abnormal multiplier) or
$\lambda_{cost}=-1$ (normal multiplier).

We define a {\it Reinhardt trajectory} to be a trajectory $(g,z)$ such
that its disk $D(g,z)=\DR$ is a globally optimal solution to the
Reinhardt problem.

\begin{lemma} Let $(g,z)$ be a Reinhardt trajectory.
Then the trajectory has a lifting
\[
\lambda:[0,t_f]\to T^*\Mstar, \quad\lambda_{cost}\in \{0,-1\}
\] to the cotangent space.  The lifted trajectory is a Pontryagin
extremal trajectory.
\end{lemma}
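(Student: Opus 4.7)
The plan is to apply the standard Pontryagin Maximum Principle for optimal control problems on a Lie group, in the right-invariant formulation of \cite[Ch.18]{agrachev2013control}, using the trivialization of $T^*\SL$ coming from the invariant form $\ang{A,B}=\op{trace}(AB)$. Essentially all the preparatory work has been done in Sections \ref{sec:fil}--\ref{sec:ham}; the lemma is a packaging statement that assembles these ingredients.

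First I would note that the existence of a Reinhardt trajectory $(g,z)$ giving a measurable optimal control $u:[0,t_f]\to U$ has already been established: Reinhardt's existence theorem provides $\DR$, and the preceding theorem of Section \ref{sec:fil} recovers the controlled ODE (\ref{eqn:state}). Then I would verify that the three hypotheses of Filippov (compactness of $U$, convexity of the velocity sets, and compactly-supported vector fields) hold in a neighborhood of the optimal trajectory; (1) and (2) are immediate from Section \ref{sec:fil}, and for (3) the no-corner result of Reinhardt guarantees that the optimal trajectory $z$ remains in a compact $K\subset\hstar$, so the support-function construction of Section \ref{sec:fil} reduces to a standard PMP setting while not altering the Hamiltonian along the trajectory.

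Next I would invoke the PMP itself, which produces a nonzero Lipschitz lifted covector $(\lambda_{cost},\lambda):[0,t_f]\to \ring{R}\times T^*M$ with $\lambda_{cost}\le 0$ constant, satisfying the adjoint equations and the maximization condition $H(\lambda;u(t))=H^+(\lambda_{cost},\lambda(t))$ a.e. For a free terminal time problem, one additionally gets $H^+\equiv 0$ along the trajectory. The coordinate form of the adjoint equations on the $\SL$ factor is $\Lambda'=[\Lambda,X]$ (the standard expression in the right-invariant trivialization, see \cite[Ch.18]{agrachev2013control}), and on the $\h$ factor it is the coordinate form in (\ref{eqn:adjoint}); together with the Hamiltonian (\ref{eqn:ham}) these reproduce conditions (1)--(5) of Section \ref{sec:pmp}. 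Rescaling by a positive scalar (which leaves the PMP conditions invariant) normalizes $\lambda_{cost}\in\{0,-1\}$.

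The main obstacle is that the state space $\Mstar$ is an open subset cut out by the star inequalities, which look like state constraints and in general require a PMP with measure multipliers. I would handle this by observing that, for the optimal trajectory, the constraints are strictly interior: the compact set $K\subset\hstar$ containing the trajectory is bounded away from $\partial\hstar$, so the PMP conditions are local around the trajectory and the constraint plays no role. The homotopy condition (3) on admissible trajectories is an open condition in the $C^0$-topology and likewise imposes no multiplier. The only remaining item is transversality at the endpoints, which reduces to standard conditions on the endpoint manifolds $\{I\}\times\h$ and $\{R\}\times\h$ (the initial point $z_0$ is free and the terminal point is $R^{-1}.z_0$), and this is handled in Section \ref{sec:trans}, which I would simply cite.
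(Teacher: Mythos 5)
Your proposal is correct and follows essentially the same route as the paper: the paper's own proof is a two-sentence sketch that cites Filippov's lemma (whose hypotheses were verified in Section \ref{sec:fil}) and then invokes the PMP as a necessary condition, with the adjoint equation $\Lambda'=[\Lambda,X]$ coming from the invariant trivialization of $T^*\SL$ as in \cite[Eqn.18.18]{agrachev2013control}. Your version simply fleshes out the same ingredients (normalization of $\lambda_{cost}$, the interiority of the star constraints along the optimal trajectory, and the transversality conditions of Section \ref{sec:trans}) in more detail than the paper does.
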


\begin{proof} 
  Filippov's lemma gives a Lipschitz continuous path
  $\lambda:[0,t_f]\to T^*M$, with components
  $\lambda=(\Lambda,\nu)$ satisfying the
  adjoint equations (\ref{eqn:adjoint}):
(The first equation in (\ref{eqn:adjoint}), for $\Lambda'$, comes from
a trivialization of the cotangent bundle of $\SL$, as stated in
\cite[Eqn.18.18]{agrachev2013control}.)

By the work of Pontryagin and general control theory, 
the PMP are necessary conditions
for optimality.
\end{proof}

\subsection{rotational symmetry}\label{sec:sym}

We write $(\rho,z)\mapsto \rho.z$ for the action of $\SL$ on $\h$ by
linear fractional transformations.

Let $\rho\in \SO$ be any rotation.  The symmetry acts on trajectories
and related data as follows.  Let $\lambda=(\Lambda,\nu,\ldots)$ be a
lifted trajectory.  We map the path $z=x+iy$ in the upper-half plane to
the path $\bar z = \rho.z$, where we use bars to denote transformed
quantities.  Then short calculations show that we obtain another
lifted trajectory $\bar\lambda=(\bar \Lambda,\bar \nu,\ldots)$ (with
different boundary values) and associated parameters:
\begin{align*}
\bar g &= \rho g \rho^{-1};\\
\bar X &= \rho X \rho^{-1};\\
\bar z &= \rho.z;\\ 
\bar \Lambda &= \rho\Lambda \rho^{-1};\\
\bar Z_0 &= \rho Z_0 \rho^{-1};\\
\bar \delta &= \delta;\\
\end{align*}
The cost is invariant: $ {\op{\={c}ost}} = \op{cost}$.    
The transformation rule for 
$\nu$ is as follows.
The value of $\bar\nu$ at $\rho.z$ is 
\[
\bar\nu_{\rho.z} = (F^t)^{-1} \nu_z,\quad \nu_z\in T_z^*\h,
\]
where the linear map $F= d\rho_z$ of tangent spaces and transpose $F^t$
\[
F:T_z\h\to T_{\rho.z}\h,\quad F^t:T_{\rho.z}^*\h\to T_z^*\h.
\]
are induced from $z\mapsto \rho.z$.

It is remarkable that the entire Hamiltonian is invariant under the
full rotation group $\SO$:
\[
\bar H = H;\quad 
\ang{\bar\Lambda,\bar X} = \ang{\Lambda,X};\quad
(\bar \nu,\bar f)=(\nu,f);\quad
\op{trace}(J\bar X) = \op{trace}(J X).
\]

Moreover, assume that $\rho\in\langle R\rangle$. Then there exists a
permutation $\pi=\pi_\rho$ of $\{0,1,2\}$ such that
\[
\rho^{-1} \ee{2i} = \ee{2\pi_i}, \quad \text{for } i=0,1,2.
\]
Then
\begin{equation}\label{eqn:ubar}
\bar u =  (\bar u_0,\bar u_1,\bar u_2) = (u_{\pi 0},u_{\pi 1},u_{\pi 2}).
\end{equation}
We write $\bar u = \rho\cdot u$ for this action.

\subsection{terminal conditions}\label{sec:trans}

We define periodic boundary conditions (modulo rotation by $R$):
\[
g(0) = I,\quad g(t_f) = R := \exp(J\pi/3).
\]
and
\begin{equation}\label{eqn:periodic}
X(0) \in \sl,\quad X(t_f) = R^{-1} X(0) R \in \sl.
\end{equation}
The terminal condition $g(t_f)= R$ is necessary because the six paths
of the hexagonally symmetric disk must join together to give a closed
curve:
\begin{equation}\label{eqn:g-term}
g(t_f) \ee{j} = g(0) \ee{j+1}\quad \Leftrightarrow \quad g(t_f) = R.
\end{equation}
By the unit speed positive orientation conditions, 
\[
g(t+t_f) \ee{j} =
g(t) \ee{j+1} = g(t) R \ee{j},\quad
g(t + t_f) =g(t) R,
\] 
and taking derivatives: 
\begin{equation}\label{eqn:rotsym}
X(t+t_f) = R^{-1} X(t) R,\quad 
z(t+t_f) = R^{-1}.z(t).
\end{equation}
Evaluating (\ref{eqn:rotsym}) at
$t=0$ gives the terminal condition on $X(t_f)$ in Equation
\ref{eqn:periodic}.

Expressed in terms of coordinates on the upper half-plane, the
terminal condition becomes
\begin{equation}\label{eqn:z-term}
z(t_f) = R^{-1}. z(0),\quad z(0),z(t_f)\in \h,
\end{equation}
a rotation about $i$ by angle $2\pi/3$.
Expressed in terms of a complex variable in the Poincar\'e disk model,
the terminal condition becomes a counterclockwise rotation by angle
$2\pi/3$:
\[
w(t_f) = \zeta  w(0),\quad w(0),w(t_f) \in \D,\quad \zeta = e^{2\pi i/3}.
\]

In optimal control problems such as this with  free terminal time $t_f$,
Pontryagin's maximum principle (PMP) includes a transversality
condition at time $t=t_f$.  For periodic systems such as ours, the
transversality condition can be found in Liberzon
\cite[p134]{liberzon2012calculus}.  In our setting, the system is
periodic up to rotation by $R$.  Our transversality conditions can be
expressed as follows:
\begin{align}\label{eqn:transverse}
\begin{split}
\Lambda(t_f) &= R^{-1} \Lambda(0) R,\\
F^t\lambda(t_f) &= \lambda(0),\\
\end{split}
\end{align}
where $F=dR^{-1}_{z(0)}:T_{z(0)}\h\to T_{z(t_f)}\h$ is the 
linear map of tangent spaces
induced from $z\mapsto R^{-1}.z$ and 
 its transpose is 
\[
F^t:T^*_{z(t_f)}\h\to T^*_{z(0)}\h.
\]

\section{Explicit trajectories with bang-bang controls}

By a bang-bang control we mean a measurable control function $u$ that
takes values in the set $\{e_1,e_2,e_3\}$ of vertices of the control
simplex $U$.

\subsection{constant control at the vertex $e_2$ or $e_3$}
\label{sec:e2e3}

Throughout this section, we assume that the control $u\in U$ is
constant, fixed at a vertex $u=e_3$ or $u=e_2$ of $U$.
Under this assumption, we give
the general explicit formula for the state and costate.  
With such controls, the control matrix $Z_0$ in Equation
(\ref{eqn:u}) simplifies to the form
\[
c_{11}(Z_0) = \pm 1/\sqrt{3},\quad c_{12}(Z_0) = -2/3,
\quad c_{21}(Z_0) = 0,\quad m = c_{12}/(2 c_{11}) = \pm 1/\sqrt{3}.
\]
Specifically, $m=m_3=1/\sqrt{3}$ (control $u=e_3$) and $m_2= - 1/\sqrt{3}$,
for these two controls.

In this context, the ODE (\ref{eqn:ode-state-upper}) reduces to
\begin{equation}\label{eqn:ode-state-bang}
x'  = y;\quad y' = \frac{y^2}{m + x}.
\end{equation}
The star inequalities imply that $c_0:= x(0)+m\ne 0$. Set $\alpha =
y(0)/c_0$, which is also nonzero by the star inequalities.  The
general solution to ODE (\ref{eqn:ode-state-bang}) is
\begin{equation}\label{eqn:alpha}
x(t) = -m + c_0 e^{\alpha t},\quad y(t) = c_0 \alpha e^{\alpha t}.
\end{equation}
(We also write this curve as $t\mapsto z(z_0,t)\in \h$, where
$z_0=x(0)+iy(0)$.)  In particular, each trajectory traces out a line
$y = \alpha (x+m)$ through the fixed point $(-m,0)$.  See Figure
\ref{fig:traj-upper}.  The motion is away from the fixed point when
$m=m_3$ and towards the fixed point when $m=m_2$.  That is,
$\alpha_3>0$ and $\alpha_2<0$.

Set $s= e^{\alpha t}$.  Expressed in terms of the independent variable $s$, 
the differential equation (\ref{eqn:g}) takes the form
\[
\alpha s\frac{dg}{ds} = g X,\quad g(1)=I,
\]
which has the explicit solution
\[
g(s) = I + \frac{s-1}{\alpha^2 c_0 s}\begin{pmatrix}
c_0-m & * \\
1 & 
 m-c_0 s
\end{pmatrix},
\]
where the missing entry $(*)$ is determined by the condition
$\det(g(s))=1$.

The adjoint equation also has an explicit exact general solution,
which appears in the accompanying computer algebra calculations.
Although it is entirely explicit, the solution is a bit too long to
print here.  The function $\nu$ is a pair of polynomials in $t$,
$e^{\alpha t}$ and $e^{-\alpha t}$, and there are five constants of
integration (beyond $z_0$).  These constants are determined by the
initial vector
\[
\lambda(0)\in T_{(I,z_0)}^* M.
\]

\tikzfig{traj-upper}{The trajectories with constant control $u=e_i$
  are circles or lines, shown here in the star region of the upper
  half-plane.}{
\def\rt{1.732}
\begin{scope}[xshift=3in,yshift=0in]
\draw (-1/\rt,0)--(-1/\rt,3);
\draw (1/\rt,0)--(1/\rt,3);
\draw (-1,0) -- (1,0);
\draw (0,0) node [anchor=north] {$u=e_3$};
\begin{scope}
\clip (-1/\rt,0) rectangle (1/\rt,3cm);
\foreach \t in {10,20,30,40,50,60,70,80} \draw[->,semithick]
(-1/\rt,0) -- ++ (\t:4);
\foreach \t in {10,20,30,40,50,60,70,80} \draw[->,semithick]
(-1/\rt,0) -- ++ (\t:1.5);
\draw[fill=white] (0,0) circle (0.577cm);
\end{scope}
\end{scope}
\begin{scope}[xshift=1.5in,yshift=0in]
\draw (-1/\rt,0)--(-1/\rt,3);
\draw (1/\rt,0)--(1/\rt,3);
\draw (-1,0) -- (1,0);
\draw (0,0) node [anchor=north] {$u=e_2$};
\draw[->,semithick] (0,2) -- ++ (-70:0.4);
\begin{scope}
\clip (-1/\rt,0) rectangle (1/\rt,3cm);
\foreach \t in {10,20,30,40,50,60,70,80} \draw[semithick]
(1/\rt,0) -- ++ (180-\t:4);
\draw[fill=white] (0,0) circle (0.577cm);
\end{scope}
\end{scope}
\begin{scope}[xshift=0in,yshift=0in]
\draw (-1/\rt,0)--(-1/\rt,3);
\draw (1/\rt,0)--(1/\rt,3);
\draw (-1,0) -- (1,0);
\draw (0,0) node [anchor=north] {$u=e_1$};
\clip (-3,0) rectangle (3,3);
\foreach \t/\r in {0/0.577,0.2/0.611,0.4/0.702,0.6/0.832,
0.8/0.986,1/1.154,1.2/1.33,1.4/1.51}
\draw[semithick,gray!30]
(0,\t) circle (\r);
\begin{scope}
\clip (-1/\rt,0) rectangle (1/\rt,3cm);
\foreach \t/\r in {0/0.577,0.2/0.611,0.4/0.702,0.6/0.832,
0.8/0.986,1/1.154,1.2/1.33,1.4/1.51,1.6/1.7}
\draw[semithick]
(0,\t) circle (\r);
\draw[->,semithick] (0,2.33) -- ++ (-180:0.3);
%
\draw[fill=white] (0,0) circle (0.577cm);
\end{scope}
\end{scope}
}

\subsection{constant control $u=e_1$}
\label{sec:e1}

The three extremal controls $u=e_1$, $e_2$, and $e_3$ are related by
rotational symmetry of the upper-half plane.  These symmetries are
more visually evident in the disk model of hyperbolic space, but the
solutions to the ODE take a simpler form in the upper-half plane.

Equation (\ref{eqn:ubar}) implies that we obtain the general
explicit solutions to the state and costate equations for control
$u=e_1$ by rotating solutions with control $e_2$ or $e_3$, as
described in Section \ref{sec:sym}.  
Details are found in the
computer code.

The trajectories with constant control $e_3$ move along Euclidean
lines through $(-1/\sqrt{3},0)$, which we view as circles through
$(-1/\sqrt{3},0)$ and $\infty$.  Under linear fractional
transformations, circles map to circles.  From this, we conclude that
trajectories with constant control $e_1$ must move along Euclidean
circles through the two fixed points $(\pm 1/\sqrt{3},0)$.

\subsection{bang-bang controls}\label{sec:bang}

\begin{lemma}\label{lemma:u-lambda} 
For every $\lambda\in T^*M$, the set of maximizers of
  the Hamiltonian:
\[
U_\lambda := \{u \in U \mid H(\lambda,u) = 
\max_{u\in U} H(\lambda,u) = H^+(\lambda)\}
\]
is a face of the convex set $U$; that is, $U_\lambda$ is a vertex, an
edge, or all of $U$.
\end{lemma}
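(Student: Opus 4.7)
The plan is to reduce the lemma to the elementary statement that the zero set of a nonnegative affine function on a convex polytope is a face. First, I would isolate the $u$-dependence of the Hamiltonian. Inspecting \eqref{eqn:ham}, only $H_{\h}(\nu,x,y;u) = \nu_1 f_1(x,y;u) + \nu_2 f_2(x,y;u)$ depends on $u$, since $X = \hat z(x,y)J\hat z(x,y)^{-1}$ is a function of the state alone. So up to an additive $u$-independent constant $C=C(\lambda)$,
\begin{equation*}
H(\lambda;u) = C + \nu_1 f_1(x,y;u) + \nu_2 f_2(x,y;u).
\end{equation*}

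Next I would exploit the explicit form of the state ODE in \eqref{eqn:ode-state-upper}. The coefficients $a,b,c$ of the control matrix $Z_0(u)$ are affine linear functions of $u\in U$, so the common denominator $Q(u) := b + 2ax - cx^2 - cy^2$ and both numerators $y(b+2ax-cx^2+cy^2)$, $2(a-cx)y^2$ are affine linear in $u$. Hence there exist affine functions $P,Q:U\to\ring{R}$ (depending on $\lambda$) such that
\begin{equation*}
H(\lambda;u) = C + \frac{P(u)}{Q(u)}.
\end{equation*}
A direct calculation (one matrix multiplication) shows $\op{trace}(Z_0(u)X(x,y)) = Q(u)/y$. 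Since $(x,y)\in\hstar$ gives $y>0$ and the star inequalities hold, Lemma \ref{lemma:denneg} forces $Q(u)<0$ for every $u\in U$; in particular $Q$ has fixed sign on $U$ and $H(\lambda;\cdot)$ is continuous on the compact simplex, so the maximum $H^+(\lambda)$ is attained.

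Now I would finish with a one-line convexity argument. Set $M := H^+(\lambda) - C$ and
\begin{equation*}
L(u) := P(u) - M\,Q(u),
\end{equation*}
which is an affine linear function on $U$. By the definition of $M$, $P(u)/Q(u) \le M$ for all $u\in U$; multiplying through by $Q(u)<0$ reverses the inequality to $L(u) \ge 0$ on $U$, with equality precisely on $U_\lambda$. Thus $U_\lambda = \{u\in U : L(u)=0\}$ is the zero set of a nonnegative affine function on the simplex, which is a face. The faces of the $2$-simplex $U$ are exactly the vertices, the edges, and $U$ itself (the empty face being excluded since $U_\lambda$ is nonempty), giving the stated trichotomy.

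The only potential obstacle is the bookkeeping in step two, namely verifying that the linearity of numerator and denominator in $u$ survives after forming $\nu_1 f_1 + \nu_2 f_2$ with a common denominator; this is routine since the two fractions already share the denominator $Q(u)$, so no delicate cancellation is needed.
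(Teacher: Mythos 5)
Your proof is correct, and it rests on exactly the same structural facts as the paper's: only $H_{\h}$ depends on $u$, it is a ratio of affine functions of $u$ with common denominator $Q(u)$ proportional to $\op{trace}(Z_0(u)X)$, and Lemma \ref{lemma:denneg} gives $Q$ a fixed sign on $U$. The two arguments diverge only in the final deduction. The paper restricts $H$ to an arbitrary segment $[u_1,u_2]\subset U$, observes that it becomes a linear fractional function of the segment parameter with derivative of fixed sign, and concludes that a function monotone on every segment attains its maximum on a face. You instead clear the denominator globally: since $Q<0$, the inequality $P(u)/Q(u)\le M$ becomes $L(u):=P(u)-MQ(u)\ge 0$ with $L$ affine, and $U_\lambda$ is the zero set of a nonnegative affine function on the simplex, hence a face. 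Your version is slightly more economical and makes the face structure immediate (the paper's monotonicity step still requires a small argument that an interior maximizer of a segment-wise monotone function forces both endpoints to be maximizers); the paper's version avoids introducing the auxiliary function $L$ and the explicit trace computation $\op{trace}(Z_0(u)X)=Q(u)/y$, though that computation is harmless and you verified it correctly. Both proofs implicitly use that the state lies in $\hstar$ (so that Lemma \ref{lemma:denneg} applies), so there is no gap relative to the paper on that point.
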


\begin{proof}
  Fix $\lambda\in T^*M$.  The only term of the Hamiltonian that
  depends on the control is $H_{\h}$.  This term is the ratio of two
  linear functions on $U$.  For any $u_1,u_2\in U$, let $u(s) = s u_1
  + (1-s) u_2$ for $s\in [0,1]$ be a segment in the convex control
  set.  Then the dependence of the Hamiltonian along the segment has
  the form of a linear fractional transformation
\[
s\mapsto H(\lambda,u(s))=\frac{a s + b}{c s + d},
\]
with derivative
\[
\frac{ad - bc}{(c s + d)^2}
\]
of fixed sign.  (The denominator is nonzero by Lemma
\ref{lemma:denneg}.)  Thus the Hamiltonian is monotonic along every
segment in the control simplex $U$.  The Hamiltonian therefore assumes
its maximum along a face.
\end{proof}

If $X\in \sl$ lies in the orbit of $J$, let $X_\h\in \h$ be the
corresponding element of the upper-half plane under the bijection
(\ref{eqn:X-h}).  For $t\ge 0$ and $z\in\h$, let $\gamma_0(z,t)\in
\SL$ be the trajectory with constant control $u=e_3$ and initial
conditions
\[
\gamma_0(z,0)=I,\quad \gamma_0'(z,0)_\h =  z.
\]
 (As always, prime denotes the
$t$ derivative.)
Let $\gamma_i(z,t)\in \SL$, for $t\ge 0$, $i\in\ring{Z}$, and $z\in
\h$ be the trajectory
\[
\gamma_i (z,t) := R^i \gamma_0(z,t) R^{-i}.
\]
We have 
\[
\gamma_i(z,0) = I,\quad 
(\gamma_i'(z,0))_\h = R^i.z,
\]
with constant control $u=R^i\cdot e_3$, using the action
(\ref{eqn:ubar}) of the cyclic group $\langle R\rangle$ on the control
simplex $U$. 

We define a continuous (shifted) extension of $\gamma_i$ that is
non-constant only for $t\in [T_1,T_2]$:
\[
\gamma_i(z,T_1,T_2,t) := \begin{cases} 
I, & \text{if } t \le T_1;\\
\gamma_i(z,t-T_1), &\text{if } T_1\le t\le T_2;\\
\gamma_i(z,T_2-T_1), &\text{if } T_2\le t.
\end{cases}
\]
The derivative $\gamma_2'$ has jump discontinuities at $T_1$ and
$T_2$.  Let $z(z_0,t)$ be the solution to the ODE
(\ref{eqn:ode-state-bang}) with constant control $u=e_3$ and
initial condition $z_0$.  For any tuple 
\[
\kappa =
((k_1,t_1),(k_2,t_2),\ldots,(k_{n},t_{n}))
\]
 with $k_i\in\ring{Z}$ and
$t_i\ge 0$, and for any $z_0\in \hstar$, let
\begin{align}
\begin{split}
T_0 &= 0;\\
T_{i+1} &= T_i + t_{i+1};\\
z_{i} &= R^{k_i -k_{i+1}}. z(z_{i-1},t_i);\\
\gamma(\kappa,z_0,t) &= \gamma_{k_1}(z_0,T_0,T_1,t) 
\gamma_{k_2}(z_1,T_1,T_2,t)\cdots \gamma_{k_n}(z_{n-1},T_{n-1},T_n,t).
\end{split}
\end{align}
Note that on the right-hand side of the last equation, only one factor
at a time is non-constant.  Then $\gamma(\kappa,z,t)$ is continuous in
$t$ and has unit speed parametrization.  Set $X(\kappa,z,t):=
\gamma(\kappa,z,t)^{-1} \gamma'(\kappa,z,t)$.  Note that for $t\in
[T_{i-1},T_{i}]$, when the $i$th factor is active, we have
\begin{align*}
X(\kappa,z_0,t) &= \gamma_{k_i}(z_{i-1},T_{i-1},T_{i},t)^{-1} 
\gamma_{k_i}'(z_{i-1},T_{i-1},T_{i},t) \\
  &=\gamma_{k_i}(z_{i-1},t-T_{i-1})^{-1}\gamma_{k_i}'(z_{i-1},t-T_{i-1})\\
  &= R^{k_i} X(z_{i-1},t - T_{i-1}) R^{-k_i},
\end{align*}
where $X(z,t) = \gamma_0(z,t)^{-1} \gamma_0'(z,t)$.  Comparing left
and right limits of $X(\kappa,z_0,t)$ at the boundary value $t=T_i$, we
find that $X(\kappa,z_0)$ is continuous in $t$:
\begin{align*}
X(\kappa,z_0,T_i^-)_\h &= R^{k_i}. z(z_{i-1},t_i) = R^{k_{i+1}}.z_i;\\
X(\kappa,z_0,T_i^+)_\h &= R^{k_{i+1}}.z_i.
\end{align*}

From this, it is easy to see that $\gamma(\kappa,z_0)$ is the general
bang-bang trajectory with finitely many switches (at times
$T_0,\ldots, T_n$), as we vary $\kappa$ and $z_0$. The control on the
interval $[T_{i-1},T_{i}]$ is $u=R^{k_i}\cdot e_3\in U$.

The total $\op{cost}(z_0,[0,t])$ of the trajectory (\ref{eqn:alpha})
with initial condition $z_0$ up to time $t$ is an easy (freshman
calculus) integral to compute from Equation (\ref{eqn:cost-upper}),
which we do not display here.  The total cost of
$\gamma(\kappa,z_0,t)$ from time $0$ to $T_n$ is
\begin{equation}\label{eqn:total-cost}
\sum_{i=0}^{n-1} \op{cost}(z_i,[0,t_{i+1}]).
\end{equation}

\subsection{the smoothed regular polygon}\label{sec:polygon}

Reinhardt conjectured that the smoothed octagon is the solution
to his problem.

The smoothed octagon comes from a periodic bang-bang control to the
state equations with four links (and four switching times).  The
control switches four times in a cyclic order around the extreme
points of the control simplex $U$.  The smoothed octagon itself can be
visualized as being made of $24$ segments: $8$ smoothed corners and
$16$ half-edges.  These $24$ segments are arranged into four links,
each consisting of $6$ arcs.  The four links are congruent, under the
rotational symmetry $R$.

We generalize the smoothed octagon to a smoothed regular polygon as
follows.  Let $k$ be a positive integer.  We consider a trajectory
with $3k+1$-links of the same length of the form
$t\mapsto\gamma(\kappa,z_k,t)$, with
\begin{equation}\label{eqn:3k+1}
\kappa = ((0,t_k),(-1,t_k),(-2,t_k),\ldots,(-3k,t_k)),
\end{equation}
where $t_k>0$ and $z_k\in\h$ are to be determined as functions of $k\ge1$.  
(Note that the meaning of $t_k,z_k$ has changed from $t_i,z_i$ in the previous
section.)

Let $g_k = \gamma(z_k,t_k)\in \SL$ be the position at the end of a
single link.  The endpoint condition (\ref{eqn:g-term}) for
(\ref{eqn:3k+1}) is
\[
R = g_k (R^{-1} g_k R^1) (R^{-2} g_k R^2) \cdots (R^{-3k} g_k R^{3k}),
\]
or equivalently,
\begin{equation}\label{eqn:eigen}
(R^{-1} g_k)^{3k+1} = R^{-3k} = (-I)^k.
\end{equation}
Let $\mu,\mu^{-1}$ be the eigenvalues of $R^{-1} g_k\in \SL$.
Comparing eigenvalues on the two sides of (\ref{eqn:eigen}), we obtain
$\mu^{3k+1} = (-1)^k$, and
\[
\mu = e^{\pi i k/(3k+1) + 2\pi i \ell/(3k+1)},\quad \ell \in\ring{Z}.
\]
We pick the eigenvalues $\mu^{\pm1}$ that place $g_k$ in the smallest
neighborhood of of $1$; that is, we take $\ell=0,-k$.  Then
\begin{equation}\label{eqn:theta-k}
\op{trace}(R^{-1} g_k) = \mu+\mu^{-1}=2\cos\theta_k,\quad 
\text{where } \theta_k= \frac{\pi k}{3k+1}.
\end{equation}
For example, for the smoothed octagon $k=1$, the trace is
$\sqrt{2}$.

We impose the strong boundary condition
\begin{equation}\label{eqn:poly-z}
z(z_k,t_k) = R^{-1}.z_k,\quad \text{where } z_k = 0 + i y_k.
\end{equation}
It follows that (\ref{eqn:z-term}) holds with $t_f = (3k+1) t_k$:
\[
z(z_k,t_f) = R^{-(3k+1)}.z_k = R^{-1}.z_k.
\]
Solving (\ref{eqn:poly-z}) for $t_k$ (the time spent in each link), we
obtain
\begin{equation}\label{eqn:t1}
t_k = \frac{|\ln ((1+ 3y_k^2)/4)|}{\sqrt{3} y_k} 
\end{equation}
We have solved the nonlinear equations (\ref{eqn:theta-k}) and
(\ref{eqn:t1}) explicitly for $t_k$ and $y_k$ in the accompanying
code, but we do not display the solution here.  For each positive
integer $k$, the trajectory for the smoothed $6k+2$-gon is now
completely determined by these values of $t_k$ and $y_k$.

These formulas for $t_k$ and $y_k$ can be interpolated to functions
that are analytic in $k\in \ring{R}$.  Figure \ref{fig:cost} graphs the area of the
smoothed $6k+2$-gon as a function of $k$.  It appears that the area
function is increasing in $k$ and tends to the area $\pi$ of a circular disk.

We show that we can lift each trajectory to a Pontryagin extremal.
The following is one of the main conclusions of this article.  It
implies in particular that the smoothed octagon $k=1$ is a Pontryagin
extremal.

\begin{theorem}\label{thm:pmp} 
The smoothed regular $6k+2$-gon lifts to a Pontryagin
  extremal trajectory.  The trajectory has a normal multiplier.
\end{theorem}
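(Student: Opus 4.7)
The plan is to construct the lifted costate explicitly on a single link and propagate it to the remaining $3k$ links by the rotational symmetry of Section \ref{sec:sym}. Fix the normal multiplier $\lambda_{cost}=-1$. On the first link $[0,t_k]$, where the constant control is $u=e_3$, the adjoint equations (\ref{eqn:adjoint}) admit the explicit exact general solution recorded in Section \ref{sec:e2e3}, with five constants of integration parametrizing $\lambda(0)=(\Lambda(0),\nu(0))\in T_I^*M$. For $i=2,\ldots,3k+1$, define the lift on link $i$ to be the pushforward of the lift on link $1$ under the rotation $\rho=R^{-(i-1)}$, using the transformation rules of Section \ref{sec:sym}. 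Because those rules send solutions of the adjoint ODE with control $u$ to solutions with control $\rho\cdot u$, and because the state trajectory (\ref{eqn:3k+1}) is compatible with this rotation by construction of the smoothed $6k+2$-gon, each piece is automatically a solution of the adjoint equation with the correct control.

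With this ansatz, all PMP conditions of Section \ref{sec:pmp} reduce to a finite-dimensional system on the first link. Lipschitz continuity of $\lambda$ at every switching time $T_i$ and the transversality conditions (\ref{eqn:transverse}) at $t=t_f$ all follow from the single matching identity
\[
\Lambda(t_k)=R^{-1}\Lambda(0)R,\qquad F^t\nu(t_k)=\nu(0),\qquad F=dR^{-1}_{z(t_k)},
\]
imposed at the first switching time, which then propagates cyclically through all $3k+1$ links. Written out using the explicit formulas, this is a system of five homogeneous linear equations in the five real components of $\lambda(0)$ whose coefficients are elementary functions of $(t_k,y_k)$; for a nontrivial lift the coefficient matrix must be singular, and the plan is to exhibit a one-dimensional kernel by computer algebra. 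The identity $H\equiv 0$ reduces to the single scalar condition $H(\lambda(0),e_3)=0$ that fixes the scale, since $H$ is conserved along each link and, by the $\SO$-invariance from Section \ref{sec:sym}, preserved across every switch. The PMP maximality condition requires that the edge of $U$ joining the two controls adjacent at a switch, rather than the third vertex, maximizes $u\mapsto H(\lambda(T_i),u)$; the matching identity together with the symmetry action on $H$ forces equality of the Hamiltonian at the two adjacent vertices, so by Lemma \ref{lemma:u-lambda} the only remaining check is a single scalar inequality at the third vertex, and by symmetry it suffices to verify it at $t=0$.

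The main obstacle is computational rather than conceptual. The closed-form costate is a pair of functions polynomial in $t$, $e^{\alpha t}$, and $e^{-\alpha t}$ whose coefficients depend linearly on $\lambda(0)$, and the matching identity, the scalar equation $H(\lambda(0),e_3)=0$, and the vertex inequality all have to be reduced modulo the transcendental relations (\ref{eqn:theta-k}) and (\ref{eqn:t1}) that determine $(t_k,y_k)$. The verification is delegated to the accompanying Mathematica code: solve the linear matching system symbolically to parametrize the kernel by a single scalar, normalize by $H(\lambda(0),e_3)=0$ to determine $\lambda(0)$ explicitly (up to sign), and then verify the remaining vertex inequality. Normality of the multiplier follows from the observation that the affine section $\lambda_{cost}=-1$ meets the kernel of the matching matrix transversally; equivalently, the abnormal branch $\lambda_{cost}=0$ does not absorb the one-dimensional kernel, a point to be read off directly from the symbolic computation.
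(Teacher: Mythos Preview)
Your overall strategy---solve the adjoint ODE explicitly on one link and propagate by the rotational symmetry of Section~\ref{sec:sym}---is the same as the paper's.  But there is one genuine gap and one smaller inconsistency.

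The gap is in your treatment of the PMP maximality condition.  You propose to verify the maximizing property of $u=e_3$ only at the switching times $T_i$, writing that ``the only remaining check is a single scalar inequality at the third vertex, and by symmetry it suffices to verify it at $t=0$.''  But PMP requires $H(\lambda(t),e_3)\ge H(\lambda(t),u)$ for \emph{every} $t\in[0,t_k]$ and every $u\in U$, not just at the endpoints.  Nothing you have said prevents the switching functions $\chi_{31}(\lambda(t))$ or $\chi_{32}(\lambda(t))$ from dipping below zero in the interior of the link and returning to zero at $t=t_k$; that would invalidate the extremal.  In the paper this is handled by a separate lemma which is not a formality: after reducing by a symmetry $\chi_{k,e_1}(t_k-t)=\chi_{k,e_2}(t)$ to the single inequality $\nu_2(k,t)\ge 0$, one changes variables to $(y,v)$ mapping the region $\{k\ge 1,\ t\in[0,t_k]\}$ onto a triangle $T\subset[2\sqrt{2},4]^2$, and proves positivity of an explicit function $f(y,v)$ by checking $f(y,y)=0$, $\partial f/\partial y|_{v=y}\le 0$, and $\partial^2 f/\partial y^2\ge 0$ on $T$.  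This interior analysis for all $t$ and all $k$ simultaneously is the real content of the theorem, and it is absent from your plan.

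The smaller point is that your linear algebra is internally inconsistent.  You fix $\lambda_{cost}=-1$ at the outset, then call the five matching equations homogeneous in $\lambda(0)=(\Lambda(0),\nu(0))$, then use $H=0$ to ``fix the scale,'' and finally appeal to a kernel argument to deduce normality.  With $\lambda_{cost}$ fixed, the $\nu$ matching equations are \emph{inhomogeneous} (the adjoint ODE for $\nu$ contains the forcing term $\ang{\Lambda-\tfrac{3}{2}\lambda_{cost}J,\partial X/\partial x}$), so there is no scale and no kernel in your sense.  The paper's bookkeeping is cleaner: keep $\lambda_{cost}$ as a free parameter, observe that the $\Lambda$ matching $\Lambda(t_k)=R^{-1}\Lambda(0)R$ amounts to $\Lambda(0)$ lying in the centralizer of a regular element $Rh(t_k)\in\SL$ (hence a one-dimensional family), and then solve five equations (two independent from $\Lambda$, two from the $\nu$ matching, one from $H=0$) for the five unknowns $(\Lambda(0),\nu(0))$, obtaining a unique solution homogeneous of degree one in $\lambda_{cost}$---which is exactly what forces the multiplier to be normal.
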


\begin{proof}
We show that there exists a choice of initial conditions for
$\Lambda,\nu$ for which the PMP conditions hold.

We start with the endpoint condition (\ref{eqn:transverse}) for $\Lambda$.
Again, we prove a stronger form of transversality by showing
\begin{equation}\label{eqn:trans-Lambda}
\Lambda(t_k) = R^{-1} \Lambda(0) R,
\end{equation}
which implies (\ref{eqn:transverse}).  This is a system of three
homogeneous equations for $\Lambda(0)\in \sl$ (three unknowns).  We
indicate why a nontrivial solution to this homogeneous system for
$\Lambda(0)$ must exist.  By the form of the differential equation it
satisfies (\ref{eqn:adjoint}), as $\Lambda$ evolves in time, its
determinant remains constant.  We find that $\Lambda(t)$ remains in a
fixed conjugacy class of $\sl$.  We can therefore write $\Lambda(t) =
h(t) \Lambda(0) h(t)^{-1}$ for some $h(t)\in\SL$.  Equation
(\ref{eqn:trans-Lambda}) asserts that $\Lambda(0)$ lies in the
centralizer of $R h(t_k)$ in $\sl$.  A centralizer has minimal
dimension $1$ (which occurs when $R h(t_k)$ is regular, which occurs
here).  Thus, solutions exist and are unique up to a scalar.

We have a linear system of five equations and five unknowns.  The five
unknowns are $\Lambda(0)\in\sl$ and $\nu(0)\in\ring{R}^2$.  Two
independent equations come from (\ref{eqn:trans-Lambda}), one from the
vanishing of the Hamiltonian, and two from the endpoint condition
(\ref{eqn:transverse}) on $\nu$ (for reduced period $t_k$ instead of
$t_f$).  Explicit calculations give a unique solution to this linear
system of equations (as homogeneous functions of $\lambda_{cost}$) for
each $k$.  This forces the multiplier $\lambda_{cost}$ to be normal,
and we take $\lambda_{cost}=-1$.

Further explicit symbolic computer-algebra calculations show that
$t=0$ and $t=t_k$ are switching times.
The following lemma completes the proof, which shows that the
maximum property of Pontryagin is met for the Hamiltonian.
\end{proof}

\begin{lemma}  
  Let $k\ge 1$ be an integer.  Let $\lambda(k,t)$ be the lifted
  trajectory for the smoothed $6k+2$-gon along a single link with control
  $u=e_3$ as constructed above.  Let $H_{k,u}(t)$ be the
  Hamiltonian restricted to the lifted trajectory, with arbitrary control
  function $u:[0,t_k]\to U$.  Then
\[
H_{k,e_3}(t) \ge H_{k,u(t)}(t),\quad t\in [0,t_k].
\]
If $u\in \{e_1,e_2\}$ is a constant control at one of the first two vertices
of $U$, then equality occurs only at the endpoints of the interval $[0,t_k]$.
\end{lemma}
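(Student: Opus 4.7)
The plan is to reduce the lemma to a more tractable statement via Lemma \ref{lemma:u-lambda}, which says the maximum of $u\mapsto H(\lambda(k,t),u)$ over $U$ is attained on a face. This reduces the inequality $H_{k,e_3}(t)\ge H_{k,u(t)}(t)$ to the two vertex inequalities $H_{k,e_3}(t)\ge H_{k,e_j}(t)$ for $j=1,2$ separately. Moreover, since the lifted trajectory is a PMP solution whose actual control on the link is $e_3$, the Hamiltonian vanishes identically along it, so $H_{k,e_3}(t)\equiv 0$ on $[0,t_k]$. The claim therefore becomes
\[
H_{k,e_1}(t)\le 0,\qquad H_{k,e_2}(t)\le 0,\qquad t\in[0,t_k],
\]
with strict inequality except at the endpoints noted.

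Next I would identify the endpoint zeros. The cyclic control pattern $\kappa$ in (\ref{eqn:3k+1}) together with the action of Section \ref{sec:sym} shows that the links adjacent to $[0,t_k]$ carry the two other vertex controls (say $e_1$ on the preceding link and $e_2$ on the following, after re-labeling if necessary). The computer-algebra verification from the proof of Theorem \ref{thm:pmp} that $t=0$ and $t=t_k$ are switching times, combined with Lemma \ref{lemma:u-lambda} and continuity of $H^+$, forces the Hamiltonian at each switching time to agree on the full edge of $U$ joining $e_3$ to the adjacent vertex control. This immediately yields $H_{k,e_1}(0)=0$ and $H_{k,e_2}(t_k)=0$.

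The third and main analytical step is to pin down the sign of $H_{k,e_j}(t)$ on the interior. I would substitute the explicit expressions for $(x(t),y(t),g(t))$ from Section \ref{sec:e2e3} (with $m=m_3=1/\sqrt{3}$), together with the explicit adjoint $(\Lambda(t),\nu(t))$ whose initial data is determined in the proof of Theorem \ref{thm:pmp}, to realize each $H_{k,e_j}(t)$ as an explicit elementary function of $t$. Its denominator has fixed sign by Lemma \ref{lemma:denneg}, since the trajectory remains in $\hstar$. After the substitution $s=e^{\alpha_3 t}$, and factoring out the known endpoint zero, the sign claim reduces to a polynomial inequality in $s$ on a bounded interval, to be validated by a symbolic Sturm-type computation paralleling the other Mathematica calculations of the paper.

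The hard part is the dependence on the integer $k$: the constants $t_k$ and $y_k$ solving (\ref{eqn:theta-k})--(\ref{eqn:t1}) vary transcendentally with $k$, so a fully uniform proof for all $k\ge 1$ would require either a monotonicity argument in $k$ or a $k$-independent estimate on the remaining polynomial factor. A more practical route, consistent with the paper's methodology, is to dispatch each $k$ individually by a Sturm-type sign certificate after substituting the closed-form expressions for $t_k,y_k$. The most delicate checks are the strict inequalities $H_{k,e_1}(t_k)<0$ and $H_{k,e_2}(0)<0$ at the endpoint opposite each known zero; these encode the genuine transversality of the bang-bang switch, and are where I expect the principal technical obstacle to lie.
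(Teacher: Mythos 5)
Your reduction is sound as far as it goes: passing to the vertices via the face lemma, using $H_{k,e_3}\equiv 0$ to turn the claim into sign conditions on $H_{k,e_1}$ and $H_{k,e_2}$, and locating the endpoint zeros at the switching times all match the paper's setup (the paper additionally observes the reflection symmetry $\chi_{k,e_1}(t_k-t)=\chi_{k,e_2}(t)$, which cuts the work to the single function $\chi_{k,e_2}$, and that $\chi_{k,e_2}$ equals the costate component $\nu(k,t)_2$ up to a positive factor, so the whole lemma becomes $\nu(k,t)_2\ge 0$). But the point you flag as ``the hard part'' --- uniformity in the integer $k$ --- is exactly where your proposal stops being a proof. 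A Sturm certificate for each fixed $k$ handles only finitely many cases, and the lemma is asserted for all $k\ge 1$; you offer no mechanism to close the infinite family, so this is a genuine gap rather than a routine computation left to the reader.

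The paper's missing idea is a change of variables that compactifies the entire family at once. Setting $y=1+3y_k^2$ and $v=y\,e^{\sqrt{3}y_k t}$, and letting $k$ range over a continuous parameter $k\ge 1$, the region $\{(k,t):k\ge 1,\ t\in[0,t_k]\}$ maps onto the fixed triangle $T=\{(y,v)\in[2\sqrt 2,4]^2\mid y\le v\}$, with $t=0$ becoming the diagonal $y=v$. The rescaled switching function $f(y,v)=3\sqrt3\,y_0^5v^2\,\nu(k,t)_2$ is then a single explicit two-variable function, and its nonnegativity on $T$ follows from three elementary facts: $f(y,y)=0$, the boundary derivative
\[
\left.\frac{\partial f}{\partial y}\right|_{v=y}=y\bigl((y-4)+y\ln(4/y)\bigr)\le 0,
\]
and the convexity $\partial^2 f/\partial y^2\ge 0$ on $T$. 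This simultaneously yields the strict inequality away from the endpoints (the only zero on $[0,t_k]$ is at $t=0$, where the $t$-derivative is strictly positive, degenerating only in the unrealizable limit $v=y=4$, i.e.\ $k\to\infty$). If you want to salvage your route, you would need to supply an analogue of this uniformization --- some monotonicity or compactness in $k$ --- before the per-$k$ Sturm computations could amount to a proof.
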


\begin{proof}
  A monotonicity result  (Section \ref{sec:bang}) shows
  that the maximum of $H_{k,u}(t)$ is attained at a corner of the control simplex.  It
  is enough to show that $ \chi_{k,u}(t)\ge 0$, where
  $\chi_{k,u}=H_{k,e_3}-H_{k,u}$, for the two constant controls
  $u=e_1$ and $u=e_2$.  

An easy substitution using
  the explicit formulas for $\lambda(k,t)$ gives
\[
\chi_{k,e_1}(t_k-t) = \chi_{k,e_2}(t),\quad t\in[0,t_k].
\]
Thus, it is enough to show that $\chi_{k,e_2}(t)\ge 0$.  The function
$\chi_{k,e_2}(t)$ is equal to $\nu(k,t)_2$ up to a positive
nonzero factor.  Thus, the lemma reduces to proving that
$\nu(k,t)_2\ge0$ for $k\ge 1$.  (Here $\nu(k,t)_2$ is
the component $\nu_2$ of Section \ref{sec:ham} of the lifted trajectory
$t \mapsto \lambda(k,t)$.)

We define new variables $(y,v)$:
\[
y = 1+ 3y_k^2,\quad v = y e^{\sqrt{3} y_k t},
\]
and replace $k$ with a continuous parameter.  The region defined
by $k\ge1$ and $t\in[0,t_k]$ transforms to the triangle 
\[
T=\{(y,v)\in [2\sqrt{2},4]^2\mid  y\le v\}.
\]
Note that $t=0$ is transformed to the diagonal $y=v$ of $T$.
We define
\[
f(y,v) = 3\sqrt{3} y_0^5 v^2 \nu(k,t)_2.
\]

We show that $f$ is nonnegative on the triangle $T$ as follows.
(These calculations appear in the accompanying computer  code.)
First, an easy substitution gives $f(y,y)=0$.  (This was already
verified above in a different manner, when we showed that $t=0$ is a
switching time.)  Second, the derivative is negative on the diagonal:
\begin{equation}\label{eqn:f'}
\partials {f}{y}|_{v=y} = y((y-4) + y \ln (4/y)) \le 0.
\end{equation}
Finally, the second derivative is positive on $T$:
\[
\frac{\partial^2 f}{\partial y^2} = -10 - 5v + 2v/y + 7y - 
 2v\ln 4 + 6y\ln 4  
+ 4v\ln v  - 2(v + 3y)\ln y\ge 0.
\]
(We leave this last inequality as a tedious but elementary exercise for the reader.)
Positivity follows.

Looking more closely at the cases of equality, we see that the only
zero of the switching function on $[0,t_k]$ occurs at $t=0$, and that
the derivative is strictly positive at $t=0$.  (The derivative is zero
in (\ref{eqn:f'}) at the corner $v=y=4$ of the disk, but this
corresponds to the unrealizable limiting case as $k\mapsto\infty$.)
\end{proof}

\begin{remark}
A related constructed gives a trajectory with $3k-1$-links -- the
smoothed $6k-2$-gon $D_{6k-2}$, for $k\ge 2$. 
The changes are minor.  We replace equation
(\ref{eqn:3k+1}) with
\begin{equation}\label{eqn:3k+2}
\kappa = ((1,t_k),(2,t_k),(3,t_k),\ldots,(3k-1,t_k)).
\end{equation}
The trajectory is
\[
\gamma(\kappa,R^{-1}.z_k,t).
\]
Equation \ref{eqn:theta-k} becomes
\begin{equation}\label{eqn:theta-k'}
\op{trace}(R g_k) = 2\cos\theta_k,\quad 
\text{where } \theta_k= \frac{\pi k}{3k-2}.
\end{equation}
Equation (\ref{eqn:t1}) is unchanged.  The
initial link of the smoothed octagon now has constant control
$u=e_2$.
\end{remark}

\begin{remark} It seems that the smoothed $6k-2$-gon $D_{6k-2}$ is {\it
    not} a Pontryagin extremal trajectory.  Specifically, all of the
  conditions seem to hold, except that the Pontryagin multiplier
  $\lambda_{cost} >0$ has the wrong sign.  This suggests that these
  smoothed polygons are Pontryagin extremal trajectories for the
  problem of maximizing the area.
\end{remark}

\begin{remark}
  When $k=1$, the smoothed polygon $D_4$ degenerates to a rectangle
  with corners (Figure \ref{fig:rectangle}) and area $\sqrt{12}$.  Allowing
  $k$ to be non-integral, for small values of $k>1$, we obtained
  smoothed rectangles (that do not quite satisfy the boundary conditions).
\end{remark}

\tikzfig{rectangle}{ By taking a smoothed $6k-2$-gon and interpolating
  formulas to a fractional number of sides (here $k=1.03$), we see
  that the shape appears to be tending to a rectangle of area
  $\sqrt{12}$ as $k\mapsto 1$.  }{
\def\rt{1.732};
\draw[blue!30] (0,-\rt) -- (1.5,\rt/2) -- (-1.5,\rt/2) -- cycle;
\draw[blue!30] (0,\rt)-- (-1.5,-\rt/2) -- (1.5,-\rt/2) --cycle;
\draw 
(1., 0.)-- (1., 0.0169549)-- (1., 0.048467)-- (1., 0.107035)-- (1., 
  0.215889)-- (1., 0.418202)-- (0.991058, 0.620692)-- (0.960685, 
  0.730146)-- (0.896834, 0.789974)-- (0.77418, 0.823909)-- (0.544074, 
  0.845408);
\draw (0.5, 0.866025) --  (0.269022, 0.870587) --  (0.144746, 
  0.873041) --  (0.0778794, 0.874361) --  (0.0419025, 0.875072) --  (0.0225454,
   0.875454) --  (0.00318826, 0.875836) --  (-0.0327886, 
  0.876547) --  (-0.0996547, 0.877867) --  (-0.223931, 
  0.880321) --  (-0.454909, 0.884882);
\draw (-0.5, 0.866025) --  (-0.730978, 0.853632) --  (-0.855254, 
  0.824574) --  (-0.922121, 0.767326) --  (-0.958097, 0.659183) --  (-0.977455,
   0.457252) --  (-0.98787, 0.255144) --  (-0.993473, 0.146401) --  (-0.996488,
   0.0878927) --  (-0.998111, 0.0564126) --  (-0.998983, 0.0394749);
\draw (-1., 0.) --  (-1., -0.0169549) --  (-1., -0.048467) --  (-1., -0.107035) --   
(-1., -0.215889) --  (-1., -0.418202) --  (-0.991058, -0.620692) --   
(-0.960685, -0.730146) --  (-0.896834, -0.789974) --  (-0.77418,  
-0.823909) --  (-0.544074, -0.845408);
\draw (-0.5, -0.866025) --  (-0.269022, -0.870587) --  (-0.144746, -0.873041) --   
(-0.0778794, -0.874361) --  (-0.0419025, -0.875072) --  (-0.0225454,  
-0.875454) --  (-0.00318826, -0.875836) --  (0.0327886, -0.876547) --   
(0.0996547, -0.877867) --  (0.223931, -0.880321) --  (0.454909, -0.884882);
\draw (0.5, -0.866025) --  (0.730978, -0.853632) --  (0.855254, -0.824574) --   
(0.922121, -0.767326) --  (0.958097, -0.659183) --  (0.977455, -0.457252) --   
(0.98787, -0.255144) --  (0.993473, -0.146401) --  (0.996488, -0.0878927) --   
(0.998111, -0.0564126) --  (0.998983, -0.0394749);
}

The trajectory in $\h$ for $D_{6k+2}$ follows a triangle (with edges
following the arcs of Figure \ref{fig:traj-upper}) centered at
$z=i\in\h$.  It moves counterclockwise around $i$, traversing one edge
for each link (Figure \ref{fig:tri}).  The trajectory in $\h$ for
$D_{6k-2}$ also follows an inverted triangle
centered at $z=i\in \h$.  It moves clockwise.

\tikzfig{tri}{The trajectory in the upper-half plane of a smoothed
  $6k+2$-gon follows $3k+1$ edges moving counterclockwise on a
  triangular path centered at $i\in\h$ (left).  The trajectory for the
  smoothed $6k-2$-gon follows $3k-1$ edges moving clockwise on an
  inverted triangle centered at $i\in\h$ (right).}  {
\def\rt{1.732}
\begin{scope}
\node (a) at (0,-1) {};
\node (b) at (\rt/2,0.5) {};
\node (c) at (-\rt/2,0.5) {};
\draw[->] (a) to node {} (b);
\draw[->] (b) to node {} (c);
\draw[->] (c) to node {} (a);
\smalldot{0,0};
\end{scope}
\begin{scope}[xshift=1.5in]
\node (a) at (0,1) {};
\node (b) at (-\rt/2,-0.5) {};
\node (c) at (\rt/2,-0.5) {};
\draw[<-] (a) to node {} (b);
\draw[<-] (b) to node {} (c);
\draw[<-] (c) to node {} (a);
\smalldot{0,0};
\end{scope}
}

The cost increases with $k$ for $D_{6k+2}$ and decreases with $k$ for
$D_{6k-2}$.  In both cases, the limit of the cost is $\pi$ as
$k\mapsto\infty$.  We show a graph of the costs of the smoothed
polygons as a function of the number $n=6k\pm 2$ of sides (Figure
\ref{fig:cost}). 

\tikzfig{cost}{The graph interpolates the cost $c$ of known critical
  points as a function of the number $n=6k\pm 2$ of straight edge
  segments in the corresponding smoothed polygon.  The cost tends to
  $\pi$ as $n$ increases.  The data is consistent with Reinhardt's conjecture.}{
\begin{scope}[xscale=0.5,yscale=40]
\draw[->,gray] (6,3.14159) to (22,3.14159);
\draw[->,gray] (6,3.1) to (6,3.2);
\node (n) at (23,3.14159) {$n$};
\node (pi) at (5,3.14159) {$\pi$};
\node (c) at (6,3.22) {$c$};
\clip (4,3.1) rectangle (22,3.21);
\draw  (4,3.456) to [out=0,in=180] (8,3.126);
\draw (8,3.126) to [out=0,in=180] (10,3.15);
\draw (10,3.15) to [out=0,in=180] (14,3.139);
\draw (14,3.139) to [out=0,in=180] (16,3.14374);
\draw (16,3.14374) to [out=0,in=180] (20,3.14058);
\foreach \x in {8,10,14,16,20} \draw[gray] (\x,3.14159+0.01) -- 
(\x,3.14159 -0.02) node[anchor=north,black] {$\x$};
\foreach \y in {3.12,3.16} \draw[gray] (6+0.5,\y) -- 
(5.5,\y) node[anchor=east,black] {$\y$};
\end{scope}
}

\subsection{(micro) local optimality of the smoothed octagon}

Nazarov has proved that the smoothed octagon is a local minimum of the
Reinhardt problem \cite{nazarov1988reinhardt}.  The following theorem
should be viewed as a control-theory analogue of Nazarov's theorem.
Our result gives {\it micro-local} optimality in the sense that we
consider a neighborhood $V$ of the lifted extremal trajectory in the
cotangent space.  The following is one of the main results of this
article.

\begin{theorem}\label{thm:local-min} Let $\lambda_{oct}:[0,t_f]\to T^*M$
  be the Pontryagin extremal lifted trajectory constructed in the
  previous section for the smoothed octagon ($k=1$).  Then
\begin{enumerate}
\item there exists a punctured neighborhood $V^*$ of
  $\lambda_{oct}(0)\in T_I^*\Mstar$
  such that no initial condition in $V^*$ gives a Pontryagin extremal
  lifted trajectory.
\item Moreover, if any initial condition in $V^*$ gives a hexagonally
  symmetric disk $D$, then the area of $D$ is greater than that of the
  smoothed octagon.
\end{enumerate}
\end{theorem}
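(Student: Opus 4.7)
The plan is to parametrize nearby lifted trajectories by their initial covectors, reduce both parts to finite-dimensional conditions at $\lambda_{oct}(0)$, and verify the required nondegeneracies by symbolic computation, closely following the bang-bang structure established for the smoothed octagon in Theorem \ref{thm:pmp}.

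First I would establish smoothness of the lifted flow near $\lambda_{oct}(0)$. For $\lambda_0 \in T_I^*\Mstar$ close to $\lambda_{oct}(0)$, integrate the coupled state-adjoint system (\ref{eqn:state})--(\ref{eqn:adjoint}) with the control selected by the pointwise maximization (\ref{eqn:pmp-max}). The switching-function analysis used in the proof of Theorem \ref{thm:pmp} shows that along the octagon the relevant components of $\nu$ vanish transversally only at the four link boundaries $t = j t_1$. By continuity and the implicit function theorem, the four switching times $T_j(\lambda_0)$ remain isolated and depend smoothly on $\lambda_0$, and the selected control keeps the same cyclic vertex sequence in $U$. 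Consequently the terminal data and cost are smooth functions of $(\lambda_0, t_f)$.

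For part (1), I would package the PMP conditions as a smooth map $\Phi$ from a neighborhood of $(\lambda_{oct}(0), 4 t_1)$ in $T_I^*\Mstar \times \ring{R}_{>0}$ into a Euclidean target with components given by the transversality conditions (\ref{eqn:trans-Lambda}) and (\ref{eqn:transverse}), the closing condition $z(t_f) = R^{-1}.z(0)$, and the Hamiltonian vanishing $H = 0$ at $t = 0$. The proof of Theorem \ref{thm:pmp} already identifies $\lambda_{oct}(0)$ as a zero of $\Phi$ and establishes uniqueness at the linear level after normalizing $\lambda_{cost} = -1$. Upgrading that linear uniqueness to invertibility of the full Jacobian $D\Phi|_{(\lambda_{oct}(0), 4 t_1)}$ is a finite symbolic computation, done in the accompanying Mathematica code; by the inverse function theorem this isolates $\lambda_{oct}(0)$ among zeros of $\Phi$ and yields part (1).

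For part (2), consider the submanifold $N \subset T_I^*\Mstar$ carved out locally by the hexagonal closing conditions $g(t_f) = R$ and $z(t_f) = R^{-1}.z(0)$, with $t_f$ eliminated as an auxiliary variable; transversality of these conditions at $\lambda_{oct}(0)$ is again verified symbolically. Restricting the cost (\ref{eqn:cost-upper}) to $N$ yields a smooth function $\mathcal{A}$; by the Lagrange-multiplier form of Pontryagin's principle, its critical points are exactly the Pontryagin extremals in $N$, which by part (1) reduces to the single point $\lambda_{oct}(0)$. A symbolic computation of the Hessian of the corresponding augmented Lagrangian, restricted to $T_{\lambda_{oct}(0)} N$, then certifies positive definiteness, giving a strict local minimum of $\mathcal{A}$ at $\lambda_{oct}(0)$ and the area inequality. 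The main obstacle is this positive-definiteness verification: the second variation of the cost along bang-bang perturbations must correctly assemble contributions from the smoothly varying switching times together with contributions from the interior dynamics on each link before the quadratic form on $T_{\lambda_{oct}(0)} N$ can be extracted. This is the control-theoretic analogue of Nazarov's second-variation argument and is naturally carried out by the accompanying computer algebra.
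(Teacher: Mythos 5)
Your overall shape is right --- smooth dependence of the four switching times on the initial data, reduction to a finite-dimensional constrained problem cut out by the closing conditions, and a computer-assisted second-order verification --- but both halves have genuine gaps relative to what the paper actually does. For part (1), you propose to show that the full PMP system $\Phi$ has invertible (or injective) Jacobian at $\lambda_{oct}(0)$ and invoke the inverse function theorem. That nondegeneracy is nowhere established: it is not the computation in the accompanying code, and it is not obviously true. The paper takes a different route: it derives (1) \emph{from} (2). The endpoint conditions (\ref{eqn:g-term}), (\ref{eqn:z-term}) cut out a one-parameter family $N$ of admissible deformations; any nearby Pontryagin extremal must project into $N$; and since the cost along $N$ is $\op{cost}(0)+b_2\bar\eta_1^2+O(\bar\eta_1^3)$ with $b_2>0$, the cost has nonvanishing first derivative at every point of $N$ other than the octagon, which contradicts the first-order stationarity forced by PMP. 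If you want to keep your Jacobian argument you must actually exhibit the rank computation; otherwise you should reverse the logical order as the paper does.

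For part (2) there is a more structural problem: you define $N$ inside the initial covector space $T_I^*\Mstar$ and ask for positive definiteness of a Hessian on $T_{\lambda_{oct}(0)}N$. But the state trajectory (hence the cost and the closing conditions) depends on the covector only through the induced bang-bang control, i.e.\ through the switching times and the initial point $z_0$. The covector parametrization is therefore degenerate --- covector perturbations that do not move the switching times leave the cost unchanged --- so the restricted Hessian necessarily has a kernel and cannot be positive definite as you claim. The paper avoids this by parametrizing perturbations directly by the geometric data $\eta=(\eta_1,\dots,\eta_6)$ (four link-time shifts plus the shift of $z_0\in\h$); the five endpoint equations then cut a genuine one-dimensional analytic curve $N$ through $\eta=0$, the coefficients $a_j^1,a_j^2$ of its parametrization are bounded by interval arithmetic, and the single second derivative $b_2=4.7976\ldots>0$ is computed along that curve. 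Note also that $b_1=0$ is \emph{not} obtained numerically (interval arithmetic only gives $|b_1|<10^{-9}$); it is deduced from the fact, already proved in Theorem \ref{thm:pmp}, that the octagon is a Pontryagin extremal, so no first-order improvement is possible. Finally, your assertion that critical points of the restricted cost are \emph{exactly} the Pontryagin extremals in $N$ is only an implication in one direction (extremal implies critical); the converse is false in general and is not needed.
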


\begin{remark}
I have not checked whether the other smoothed $6k+2$-gons are
local minima in the same sense.
\end{remark}

\begin{proof}[Proof (sketch)] From the explicit form of the lifted
  trajectory $\lambda_{oct}$ that was constructed in the previous
  section, we see that the switching functions meet the $x$-axis
  transversally at $0$ and $t_1$ and have no other zeros on the
  interval $[0,t_1]$.  Thus, any sufficiently small perturbation of
  the initial conditions will produce a small perturbation of the
  switching times.  In particular, the trajectory will continue to
  consist of four links of approximately the same size and with the
  same controls as before on each link.  We can assume without loss of
  generality that $t=0$ is a switching time.

Thus, we can write the perturbed state in the form
$t\mapsto\gamma(\kappa,z,t)$, where 
\[
\kappa = ((0,t_1+\eta_1),(-1,t_1+\eta_2),(-2,t_1+\eta_3),(-3,t_1+\eta_4)),
\quad \eta = (\eta_1,\eta_2,\eta_3,\eta_4,\eta_5,\eta_6)
\]
and where $z= iy_1 + \eta_5+i \eta_6\in\h$ lies in a small neighborhood of
$0+i y_1\in\h$, and $\eta_i\in\ring{R}$ are near $0$.  Here,
$(t_1,y_1)=(t_k,y_k)$, constructed in Section \ref{sec:polygon}, with $k=1$.

We prove the second claim of the theorem first.  We have a
six-dimensional parameter space of initial conditions $\eta\in
\ring{R}^6$ and five endpoint equations (\ref{eqn:g-term}) and
(\ref{eqn:z-term}) (counting three equations from $\SL$ and two from
$\h$).  These equations define a one-dimensional curve $N\subset
\ring{R}^6$ through $p=0$.  The curve represents a $1$-dimension
family of deformations of the smoothed octagon that satisfies the
endpoint conditions.  These equations satisfy the conditions of the
analytic implicit function theorem, allowing us to use $\eta_1$ as an
analytic coordinate on $N$ near $p$.  We write the other coordinates
$\eta_2,\ldots,\eta_6$ as power series in $\eta_1$ on $N$ near $p$:
\begin{equation}\label{eqn:h}
\bar\eta_j := \eta_j|_N  = \bar\eta_1 a_j^1 + 
\bar\eta_1^2 a_j^2 + O(\bar\eta_1^3).
\end{equation}
for some  coefficients $a_j^1,a_j^2\in\ring{R}$ to be
calculated.  Initial calculations show that the constants $a_j^1$
have the form
\[
a_1^1=1,\quad a_2^1=-1,\quad a_3^1=1,\quad a_4^1=-1.
\]
The choice of local parameter $\bar\eta_1$ on $N$
gives $a_1^2=0$.  The terminal
time for the deformation is $t_f(\bar\eta_1) = t_f +
\bar\eta_1+\bar\eta_2+\bar\eta_3+\bar\eta_4$, where $t_f=4t_1$ is the terminal time
for the smoothed octagon.  We write the periodic endpoint conditions
(\ref{eqn:g-term}) (\ref{eqn:z-term}) in the form
\begin{align}\label{eqn:gh}
\begin{split}
g(\bar\eta,t_f(\bar\eta_1)) &= R;\\
z(\bar\eta,t_f(\bar\eta_1)) &= R^{-1}.(i y_1 + (\bar\eta_5+i\bar \eta_6)).
\end{split}
\end{align}
A long computer algebra calculation (using interval arithmetic, automatic
differentiation, these endpoint conditions, and the explicit formulas
for the solutions to our ODEs) gives us the power series expansion of
the left-hand side of (\ref{eqn:gh}) to second order in terms of the
unknown coefficients $a_j^i$.  This is a delicate calculation,
which explicitly propagates the unknown coefficients along the trajectory
to the endpoint. Comparing with the right-hand side of (\ref{eqn:gh}),
we obtain explicit interval arithmetic bounds on $a_j^1,a_j^2$.  Still
using computer algebra calculations, we write the
cost over the time interval $[0,t_f(\bar\eta_1)]$
as a function of $\bar\eta_1$ on $N$, and expand the cost in a power
series in $\bar\eta_1$ using these interval bounds on $a_j^1,a_j^2$.
These interval arithmetic calculations give the explicit bounds
\begin{equation}\label{eqn:local-min}
\op{cost}(\bar\eta_1) = 
\op{cost}(0) + b_1 \bar\eta_1 + b_2 \bar\eta_1^2 + O(\bar\eta_1^3),
\end{equation}
where $|b_1|< 10^{-9}$ and $b_2= 4.7976\ldots$.  
In particular $\op{cost}''(0)>0$.

We know that the smoothed octagon is a Pontryagin extremal.  This
implies that no needle perturbations of the smoothed octagon can give
a first-order improvement to the cost.  In particular, $b_1=0$.  Thus,
by (\ref{eqn:local-min}), cost has a strict local minimum at
$\bar\eta_1=0$.  This completes the proof of the final claim of the
theorem.

Finally, we give a proof of the first claim of the theorem: there
exists $V^*$ such that no initial condition in $V^*$ gives a
Pontryagin extremal lifted trajectory.  Any such lifted trajectory
satisfies the endpoint conditions, and must therefore have an initial
condition of the form (\ref{eqn:h}) and must lie in $N$.  On some punctured neighborhood
of the smoothed octagon, along $N$, the cost
(\ref{eqn:local-min}) has nonzero derivative.  This is inconsistent
with PMP. This completes the proof.
\end{proof}

\section{The singular locus}
\label{sec:sing}

\subsection{the circle as singular arc}\label{sec:circle}

The circular disk is a hexagonally symmetric disk $D$ defined by the
trajectory
\begin{equation}\label{eqn:circle}
g(t) = \exp(J t),
\quad g' = g J,\quad t\in [0,t_f],\quad t_f=\pi/3.
\end{equation}
(The six paths $t\mapsto \sigma_j(t)=g(t) \ee{j}$, for $t\in[0,\pi/3]$
are six arcs that fill out the unit circle.)  Thus, $X\equiv J$ is a
constant path, and $x+iy = i$ is also constant.  In the Poincar\'e
disk, the constant path is $w\equiv 0$.  The cost from Equation
(\ref{eqn:cost-disk}) is
\[
3 \int_0^{\pi/3} dt = \pi,
\]
which has the expected interpretation of the area of
a circular disk of radius $1$.

The control for the circle is constant:
$u=(1/3,1/3,1/3)\in U$.  The ODE (\ref{eqn:ode-state-upper})
becomes
\begin{align*}
x'& = \frac{y (1+ x^2-y^2)}{1+x^2+y^2}\\
y'& = \frac{-2 x y^2}{1+x^2+y^2},
\end{align*}
which indeed has the constant solution $x\equiv0$, $y\equiv1$.  This
constant solution determines the constant path $X\equiv J\in \sl$.

In our context, a lifted trajectory $\lambda$ is a {\it singular arc}
on $[t_1,t_2]$ if for all $t\in[t_1,t_2]$, the face
$U_{\lambda(t)}\subseteq U$ has positive dimension.  See
\cite[\S4.4.3]{liberzon2012calculus}.

\begin{lemma} The circle is an extremal singular arc.  The multiplier
  is normal.
\end{lemma}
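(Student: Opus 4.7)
The plan is to exhibit an explicit constant lift $\lambda(t) = (\Lambda, \nu)$ with $\Lambda \in \ring{R}J$ and $\nu \equiv 0$, and to verify that all PMP conditions hold with $U_{\lambda(t)} = U$ (a two-dimensional face) for every $t$. Along the circle we have $X \equiv J$ and $z \equiv i$, so $(x,y) = (0,1)$, and by the earlier analysis the actual state control is the barycenter $(1/3,1/3,1/3) \in U$.

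First I would verify the singular face condition. Specializing (\ref{eqn:ode-state-upper}) at $(0,1)$, the denominator $b + 2ax - cx^2 - cy^2$ collapses to $b - c = -2/3$, which is constant in $u$. The ratio structure of Lemma~\ref{lemma:u-lambda} therefore degenerates to an affine function of $u$: a direct calculation gives $f_1(0,1;u) = 1 - 3u_0$ and $f_2(0,1;u) = -\sqrt{3}(u_1 - u_2)$. Hence $H_\h(\nu, 0, 1; u)$ is affine in $u$, and requiring it to be independent of $u$ on the simplex (which is what the singular condition $U_\lambda = U$ demands) is equivalent to $\nu_1 = \nu_2 = 0$. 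So the only candidate singular lift has $\nu \equiv 0$.

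With $\nu \equiv 0$, the $\Lambda$-adjoint equation reduces to $\Lambda' = [\Lambda, J]$. A constant solution requires $\Lambda$ in the centralizer of $J$ in $\sl$, which is $\ring{R}J$; write $\Lambda = cJ$. The remaining costate equations $\nu_1' = -\partial H^+/\partial x$ and $\nu_2' = -\partial H^+/\partial y$ at $(0,1)$ unpack into two linear conditions on the entries of $\Lambda$ (and on $\lambda_{cost}$) via the derivatives $\partial X/\partial x$, $\partial X/\partial y$ from (\ref{eqn:orbit}) and the partials of $(f_1, f_2)$ in $(x,y)$ at $(0,1)$. Because $cJ$ has zero diagonal and antisymmetric off-diagonal part, both conditions are automatically satisfied, making $\nu \equiv 0$ consistent with the full adjoint ODE.

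The constant $c$ and the nature of $\lambda_{cost}$ are then fixed by the remaining two PMP conditions. With $\lambda_{cost} = -1$, vanishing of the Hamiltonian along the trajectory yields $\op{trace}(cJ \cdot J) - \tfrac{3}{2}(-1)\op{trace}(J \cdot J) = -2c - 3 = 0$, so $c = -3/2$. Normality is forced: the analogous computation for abnormal $\lambda_{cost} = 0$ gives $c = 0$ and hence $\lambda \equiv 0$, violating the nontriviality clause of PMP. Transversality is immediate, since $R \in \SO$ commutes with $J$, so $R^{-1} \Lambda R = \Lambda$ is automatic, and both endpoint conditions on $\nu$ are trivially met by $\nu \equiv 0$. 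The main subtlety in the whole argument is the first step---confirming the affine collapse of the control ratio at $(0,1)$---which is precisely what makes the circle a singular, rather than bang-bang, extremal; once that is in hand the rest is linear algebra in $\sl$.
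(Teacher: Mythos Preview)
Your proof is correct and follows the same approach as the paper: exhibit the constant lift $\Lambda = \tfrac{3}{2}J\lambda_{cost}$, $\nu \equiv 0$, $\lambda_{cost} = -1$, and verify the PMP conditions directly. The paper's own proof merely states this lift and asserts that ``a simple calculation based on this explicit data shows the circle is an extremal'' with $H(\lambda(t);u)\equiv 0$; you have actually carried out that calculation, including the affine collapse of the denominator $b-c=-2/3$ at $(0,1)$ and the determination of $c=-3/2$ from $H=0$. One small remark: your normality argument (``$\lambda_{cost}=0$ forces $c=0$ hence $\lambda\equiv 0$'') tacitly restricts to constant $\Lambda\in\ring{R}J$; this is fine since the lemma only requires exhibiting \emph{some} normal extremal lift, but if you wanted to rule out all abnormal lifts you would also need to observe that the $\nu'$-consistency conditions $\ang{\Lambda,\partial X/\partial x}=\ang{\Lambda,\partial X/\partial y}=0$ at $(0,1)$ already force $\Lambda(t)\in\ring{R}J$ for every $t$, so non-constant $\Lambda$ does not arise.
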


\begin{proof}  The solution to the adjoint equations is also constant:
\begin{equation}\label{eqn:adj-circle}
\Lambda \equiv \Lambda_0=\frac{3}{2} J\lambda_{cost},
\quad \nu\equiv 0.
\end{equation}
where $\lambda_{cost}=-1$ (for a normal multiplier).  A simple calculation
based on this explicit data shows the circle is an extremal.  Along
the lifted trajectory, the Hamiltonian is independent of the control:
\[
H(\lambda(t);u)\equiv 0.
\]
Thus, $U_{\lambda(t)}=U$ and the lifted trajectory is a singular arc.
\end{proof}

\begin{remark}
  Second order conditions show that circular arc is not a local
  minimizer on any time interval $[t_1,t_2]$ so that the solution to
  the Reinhardt problem contains no circular arcs
  \cite[\S5.2]{hales2011reinhardt}.  We recall the argument.
  We consider a deformation of
  a circular arc of the form
\[
g_\epsilon(t) = \exp\left(\epsilon 
\begin{pmatrix}c_{11}(t) & c_{12}(t)\\ c_{12}(t) & -c_{11}(t)
\end{pmatrix}
\right) e^{J t}
\]
for sufficiently small $\epsilon>0$ and compactly supported $C^\infty$
functions $c_{11}$, $c_{12}$ to be determined on the interval
$[t_1,t_2]$.  We emphasize that $t$ is not a unit speed parameter.
Computing the cost of $g_\epsilon$ on $[t_1,t_2]$ by
(\ref{eqn:cost-JX}), we find that
\[
\op{cost}(g_\epsilon) = \op{cost}(g_0) + 
6 \epsilon^2 \int_{t_1}^{t_2} c_{11}(t) c_{12}'(t)\,dt + O(\epsilon^3).
\]
Note that this is a second variation that is not detected by PMP.
Choose $c_{11}(t)\ge0 $ (with positive integral $\int c_{11} dt>0$)
with support on an interval where $c_{12}'(t)<0$.  Then for all
sufficiently small $\epsilon>0$, we have
\[
\op{cost}(g_\epsilon) < \op{cost}(g_0)=\pi.
\]
We may pick $\epsilon>0$ sufficiently small so that the curvatures of
the curves $t\mapsto g_\epsilon(t)\ee{i}$ are positive.  Then there
exists a control function $u:[t_1,t_2]\to U$ with controlled
trajectory $g_\epsilon$.
\end{remark}

\subsection{no singular arcs}

Recall that a Reinhardt trajectory is a trajectory $(g,z)$ such
that its disk $D(g,z)=\DR$ is a globally optimal solution to the
Reinhardt problem.

\begin{lemma} A Reinhardt trajectory contains no
singular arcs.
\end{lemma}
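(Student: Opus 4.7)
The plan is to reduce the statement to two cases by invoking Lemma \ref{lemma:u-lambda}. That lemma says that for any Pontryagin extremal the maximizer set $U_{\lambda(t)}$ is a face of $U$, so a singular arc (where this face has positive dimension) must have $U_{\lambda(t)}$ equal either to an edge of $U$ or to all of $U$ throughout an interval $[t_1,t_2]$.

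First I would dispose of the full-simplex case. If $U_{\lambda(t)} = U$ on $[t_1,t_2]$, then $H$ is independent of the control on that interval, and the explicit computation in Section \ref{sec:circle} shows the associated trajectory must coincide with the circular arc: constant control $u\equiv(1/3,1/3,1/3)$, constant state $z \equiv i$, and $X \equiv J$. The second-variation remark in Section \ref{sec:circle} constructs a smooth compactly supported deformation $g_\epsilon$ whose cost on $[t_1,t_2]$ is strictly less than that of the circle, with $\epsilon$ chosen so small that the perturbed curvatures remain positive and the star inequalities remain satisfied. Splicing this deformation into the Reinhardt trajectory and leaving it unchanged outside $[t_1,t_2]$ preserves admissibility (including the endpoint and homotopy conditions, since the deformation is compactly supported in the interior) and produces a hexagonally symmetric disk of strictly smaller area than $\DR$, contradicting global optimality.

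The harder case is an edge singular arc, $U_{\lambda(t)} = [e_i,e_j]$. From the proof of Lemma \ref{lemma:u-lambda}, the Hamiltonian along the edge is a linear fractional transformation $s \mapsto (as+b)/(cs+d)$ in the edge parameter $s$, with derivative numerator $ad - bc$ of fixed sign unless it vanishes; constancy on the edge forces $ad - bc \equiv 0$, a single algebraic identity in $\lambda(t)$. Differentiating this identity along the adjoint flow (\ref{eqn:adjoint}) yields further algebraic constraints, cutting out a low-dimensional singular locus in $T^*\Mstar$. Using the rotational symmetry of the problem (Section \ref{sec:sym}), one may reduce to a single representative edge, e.g.\ $[e_2,e_3]$, where the explicit bang solutions of Section \ref{sec:e2e3} are available. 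I would then either verify by symbolic computation (along the lines of the other computer-algebra arguments of the paper) that the singular locus is inconsistent with the star inequalities on $\hstar$, or, failing that, construct a variation of the singular arc analogous to the circular-arc variation that strictly decreases the cost, exploiting the freedom of the control to fluctuate between $e_i$ and $e_j$ without changing $H$.

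The main obstacle will be the edge case. Unlike the circle, the edge singular arcs have no obvious closed-form representatives, and verifying that the algebraic singular locus is empty inside $\hstar$, or alternatively writing down an explicit cost-decreasing variation, requires nontrivial symbolic work. The variational route seems the more robust of the two, because it directly parallels the circle argument; the Goh and generalized Legendre–Clebsch conditions provide an alternative but would still need to be checked against the star inequalities to produce a true contradiction.
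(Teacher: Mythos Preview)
Your decomposition into the full-simplex case and the edge case is correct, and your treatment of the full-simplex case (reducing to the circular arc and invoking the second-variation argument of Section~\ref{sec:circle}) matches the paper's proof.

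The gap is in the edge case: you offer two possible routes (symbolic verification that the singular locus is empty in $\hstar$, or an ad hoc cost-decreasing variation) but carry out neither, and you yourself flag this as the main obstacle. The paper's argument here is concrete and rather different from either of your proposals. The key idea you are missing is the \emph{geometric} interpretation of an edge singular arc: if $U_{\lambda(t)}=[e_2,e_3]$, then $u_0\equiv 0$, and by the curvature formula (\ref{eqn:kappa}) the branch $\sigma_0$ has zero planar curvature, i.e.\ it is a straight line segment in $\ring{R}^2$. After an affine normalization making this line horizontal, the paper writes $\sigma_0(t)=(\xi(t),-1)$, recovers $\sigma_2$ from the area constraint $\sigma_0\wedge\sigma_2=\sqrt{3}/2$, drops the unit-speed normalization in favor of $\xi$ linear, and obtains an explicit $g(t)$ depending on a single unknown function $s(t)$. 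This yields a \emph{reduced} optimal control problem with two-dimensional state $(s,s')$, one-dimensional control $u\in[0,1]$, and cost proportional to $\int s^2\,dt$. In this reduced problem the condition that the Hamiltonian be independent of the edge control forces the reduced costate $\nu_2\equiv 0$, the adjoint equations then give $\nu\equiv 0$, nontriviality forces $\lambda_{cost}\ne 0$, and constancy of the Hamiltonian $a_0\lambda_{cost}s^2/2$ forces $s'\equiv 0$, contradicting the star inequality $\det(X)=a_0 s'/\sqrt{3}>0$.

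Your abstract approach of differentiating the constraint $ad-bc=0$ along the adjoint flow is in principle workable, but in the original five-dimensional Hamiltonian system it would be a substantial symbolic computation with no guarantee of a clean contradiction. The paper's reduction to a two-dimensional state via the straight-line geometry is what makes the argument short and elementary.
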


\begin{proof} Along a singular arc, the set $U_\lambda$ 
of controls maximizing the Hamiltonian has positive dimension.
The set can be an edge of $U$ or all of $U$.
We first assume that $U_{\lambda(t)}$ is an edge on a set 
of positive measure.
By the continuity of the lifted singular arc,  $U_{\lambda(t)}$
is a fixed edge on an open set in $[t_1,t_2]$.
We show that this leads to a contradiction.

By symmetry, without loss of generality, we may assume
that the endpoints of the edge are $e_2,e_3\in U$.
Thus, the first component of the control $u$ is identically zero along
the edge.  That is, $u=(0,*,*)$ along the singular arc.  Interpreting
the vanishing of the first component of $u$ geometrically as a zero
planar-curvature constraint (\ref{eqn:kappa}), the equation
(\ref{eqn:sigma}) implies that the path $\sigma_0$ traces out a line
in $\ring{R}^2$.  After applying an affine transformation to make this
line horizontal, we may assume that $\sigma_0$ has the form
\[
\quad \sigma_0(t) = (\xi(t),-1) = 
\begin{pmatrix} 1 & -\xi(t) \\ 0 &1\end{pmatrix} 
\begin{pmatrix} 0\\1\end{pmatrix},
\]
for some function $\xi:[t_0,t_1]\to \ring{R}$.  Recall that
$\sigma_0(t) \wedge \sigma_2(t) = \sqrt{3}/2$ (see
\cite[\S3.2]{hales2011reinhardt}).  This implies
that $\sigma_2$ has the form
\[
\sigma_2(t) = \begin{pmatrix} 1 & -\xi(t) \\ 0 &1\end{pmatrix} 
\begin{pmatrix} -\sqrt{3}/2 \\ (1+s(t))/2\end{pmatrix}
\]
for some function $s:[t_0,t_1]\to \ring{R}$.

Rather than using the unit speed normalization from (\ref{eqn:unit}),
it is more convenient to choose a linear parameter such that $\xi(t) =
a_0 t + b_0$.  This requires us to make a few minor adjustments to the
optimal control problem that are adapted to the singular arc.  By
picking the parameters $a_0,b_0$ suitably, we can assume that
$\sigma_0$ starts at time $t=0$ and reaches its terminal position on
the singular arc at time $t=1$.  We optimize among trajectories with
fixed initial and terminal positions: $(r(0),s(0))=(r_0,s_0)$,
$(r(1),s(1))=(r_1,s_1)$, where $r:= s'$.

There is a unique  $g:[0,1]\to \SL$ such that
(\ref{eqn:sigma}) holds.  We compute
\[
g(t) = \begin{pmatrix}
b_0 + a_0 t & (\sqrt{3}-(a_0 t+ b_0) s)/{\sqrt{3}}\\
-1 & s/{\sqrt{3}}
\end{pmatrix}.
\]
Defining $X$ by (\ref{eqn:g}), we describe the state by the pair of
functions $(r,s)$.  (Crucially, unlike the treatment above, the
function $g$ is not included in the state.  The terminal condition for
$g$ is already determined by the terminal condition $s_1$ of the
functions $s$.)  The state equations are
\begin{align*}
s' &=  r,\\
r' &= \frac{2 (-1 + 2 u) r^2}{-1 - s + 2 u s}.
\end{align*}
The control is now $u\in [0,1]$ (representing an edge of the earlier
control simplex $U$).

Without normalizing to unit speed,
the star inequality gives
\begin{equation}\label{eqn:star-sing}
\det(X) = \frac{a_0 r}{\sqrt{3}}>0, \quad c_{21}(X) = a_0 > 0.
\end{equation}

The cost functional is 
\begin{align*}
-\frac{3}{2}\int_0^1 \op{trace}(J X) dt &= 
\frac{1}{2}\int_0^1 (3 a_0 +   \sqrt{3} s' + a_0 s^2) dt \\
&= 
\frac{1}{2} (3 a_0 + \sqrt{3}(s_1-s_0)) + \frac{a_0}{2} \int_0^1 s^2 dt\\
&= C_1 + \frac{a_0}{2} \int_0^1 s^2 dt.
\end{align*}
We drop the useless constant $C_1$ from the cost and form the
Hamiltonian
\[
\nu_1 r + \nu_2 
\frac{2 (-1 + 2 u) r^2}{-1 - s + 2 u s} + \frac{a_0\lambda_{cost} s^2}{2}.
\]
(The Lie group term is no longer present.)  The condition for the
Hamiltonian to be independent of $u$ is $\nu_2=0$.  Thus, $\nu_2
\equiv 0$ along the singular arc.  Solving the adjoint equations, we
get $\nu\equiv 0$ along the singular arc.  The nonvanishing of the
costate gives $\lambda_{cost}\ne 0$.  The Hamiltonian reduces to
$a_0\lambda_{cost} s^2/2$, which must be constant.  Hence $s$ is
constant, and $s'=r=0$, which contradicts the star inequality
(\ref{eqn:star-sing}).  Hence, no singular arc exists in this case.
We have completed the proof when $U_{\lambda(t)}$ is an edge for $t$
in some time interval.

In the remaining case, $U_\lambda = U$ on some time interval.
This implies that $\nu \equiv 0$
along the singular arc.  The adjoint equations and PMP imply that for
all $t$ along the singular arc,
\[
H =  -\partials {H}{ x} = -\partials {H}{y} =0.
\]
Solving these equations for $\Lambda$, we find a unique solution
\[
\Lambda(t) \equiv \frac{3}{2} J \lambda_{cost}.
\]
The adjoint equation $\Lambda' = [\Lambda,X]\equiv 0$ implies that
$x\equiv 0$, $y\equiv 1$.  This is the equation of a circle, which we
have seen is a singular arc.  As remarked above, a circular arc is not
second-order optimal and does not occur in the optimal solution to the
Reinhardt problem.
This completes the proof.
\end{proof}

\subsection{switching functions}

We define switching functions $\chi_{ij}:T^*\Mstar\to \ring{R}$ by
\[
\chi_{ij} (\lambda) = H(\lambda,e_i) - H(\lambda,e_j).
\]
The optimal control is constant $u=e_i$ (that is, $U_\lambda=\{e_i\}$), 
on parts of the cotangent
space where $\chi_{ij}>0$ for all $j\ne i$.  For example,
\[
\chi_{32} = 2\sqrt{3} \nu_2 y^2/(1- 3 x^2),
\]
which equals $\nu_2$, up to a positive factor.

Let $\lambda_{cost}=-1$ and
$\lambda_{sing,g}\in T^*M$
be the initial conditions (\ref{eqn:adj-circle}) matching the circle:
\begin{align*}
\lambda_{sing,g}&=(\Lambda_{sing},\nu_{sing}) = \left(
\frac{3}{2} J\lambda_{cost},0\right)\in T_{g,z_0}^*M,
\\
z_0&= (x_0,y_0) = (0,1)\in\h.
\quad g\in\SL.
\end{align*}
We define the {\it singular locus} $\Lsing$ of $\{-1\}\times T^*M$ by
\begin{equation}
\Lsing= \{\lambda_{cost}\}\times \{\lambda_{sing,g}\mid g\in \SL\}\subset
 T^*M.
\end{equation}
We write $\lsing=\lambda_{sing,I}\in T_I^*M$.  We note that up to the
affine transformation $g\in \SL$, the singular locus is the initial
condition $\lsing\in T_I^*M$ defining the singular circular arc; that
is, $g_{\lsing}(0)=I$.

We show that no transition is possible between a Pontryagin extremal
link and a circular arc.

\begin{lemma} There does not exist a Pontryagin extremal link with
  constant control $u\in\{e_1,e_2,e_3\}$ with initial conditions (or terminal conditions)
$\lambda_{sing,g}$ and $\lambda_{cost}=-1$. 
\end{lemma}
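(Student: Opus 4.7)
The plan is a contradiction argument based on the explicit bang-bang formulas of Section \ref{sec:e2e3} and the symmetries of Section \ref{sec:sym}.

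\emph{Reduction.} By left-$\SL$ invariance of the state and costate equations, together with the cyclic $\langle R\rangle$-action permuting $\{e_1,e_2,e_3\}$, it suffices to consider the case $g = I$ and $u = e_3$ with $\lsing$ at the left endpoint $t = 0$ of the link. The terminal-condition case is handled by the same argument applied to the backward extension of the trajectory (with the negative-$t$ half of the same Taylor expansion supplying the contradiction).

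\emph{Vanishing of the switching functions at $\lsing$.} Because $U_{\lsing} = U$, the Hamiltonian is control-independent at $\lsing$, so $\chi_{31}(\lsing) = \chi_{32}(\lsing) = 0$. Along the $e_3$-extremal, $H(\lambda(t),e_3) \equiv 0$, hence $\chi_{3j}(\lambda(t)) = -H(\lambda(t), e_j)$. The PMP maximum condition on the link requires $\chi_{3j}(\lambda(t)) \geq 0$ for $j = 1,2$ throughout the link.

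\emph{Lifted trajectory and Taylor expansion at $t = 0$.} The state is the $e_3$-branch of Section \ref{sec:e2e3} starting at $(x_0,y_0) = (0,1)$. Since $g\Lambda g^{-1}$ is conserved by $\Lambda' = [\Lambda,X]$, we have $\Lambda(t) = g(t)^{-1}\Lambda(0)g(t)$, while $\nu(t)$ is the unique solution of (\ref{eqn:adjoint}) with $\nu(0) = 0$. Several derivatives vanish at $t = 0$ for structural reasons: $\Lambda(0) + (3/2)J = 0$, $X(0) = J$, and consequently $\Lambda'(0) = [\Lambda(0),X(0)] = [-(3/2)J,J] = 0$. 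A short direct computation then shows that the first and second $t$-derivatives of both switching functions vanish at $t = 0$. Passing to third order, $\Lambda''(0) = [\Lambda(0), X'(0)] = -\tfrac{3}{2}[J,X'(0)]$ is nonzero, and the leading cubic coefficients of $\chi_{31}(\lambda(t))$ and $\chi_{32}(\lambda(t))$ turn out to be nonzero with opposite signs.

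\emph{Contradiction.} Whichever of the two cubic coefficients is strictly negative produces $\chi_{3j}(\lambda(t)) < 0$ on one side of $t = 0$, violating PMP in the initial-condition case; by the opposite-sign property, the other switching function supplies the contradiction in the terminal-condition case. Hence no such extremal link exists. The main obstacle is precisely to verify the opposite-sign behavior of the two cubic coefficients: a priori the signs could agree (settling only one of the two cases) or both could vanish to higher order. A direct symbolic calculation, using the explicit values of $X'(0)$, $\Lambda''(0)$, $\partial X/\partial x$ and $\partial X/\partial y$ at $(0,1)$, and the velocity-difference vectors $f_j(0,1;e_k) - f_j(0,1;e_3)$ from (\ref{eqn:ode-state-upper}), settles the sign pattern and closes the argument.
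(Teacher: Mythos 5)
Your strategy is essentially the paper's: reduce by symmetry to the link with constant control $u=e_3$ issuing from $\lsing$, propagate the costate along the explicit solution, and show that a switching function $\chi_{3j}$ becomes negative, contradicting the PMP maximum condition on the link. Your structural analysis of why $\chi_{3j}$ vanishes to second order at $t=0$ (namely $\nu(0)=0$, $B(0):=\Lambda(0)-\tfrac32\lambda_{cost}J=0$, and $B'(0)=[\Lambda(0),X(0)]=0$) is correct, and it is confirmed by the paper's closed-form answer: the paper computes
\[
\nu_2(t)=\frac{-1+e^{-2\sqrt3\,t}+2\sqrt3\,t\,e^{-\sqrt3\,t}}{\sqrt3}=-t^3+O(t^4),
\]
and since $\chi_{32}$ is a positive multiple of $\nu_2$, the cubic coefficient of $\chi_{32}$ is indeed strictly negative, which settles the initial-condition case. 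The paper in fact checks negativity of $\nu_2$ for all $t>0$, but as you correctly observe, negativity for small $t>0$ already contradicts the PMP on the link.

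The gap is the one you name yourself: the entire argument hinges on the signs of the two cubic coefficients, and in particular the terminal-condition case rests entirely on the unverified claim that the cubic coefficients of $\chi_{31}$ and $\chi_{32}$ have opposite signs. You explicitly concede that a priori the signs could agree or both coefficients could vanish to higher order, and you defer the decisive computation to ``a direct symbolic calculation'' that is not carried out; as written, the proof therefore establishes neither case. The claim is plausible --- it is consistent with the reflection-plus-time-reversal symmetry $(x,t)\mapsto(-x,-t)$, $e_2\leftrightarrow e_3$, which fixes $\lsing$ and interchanges the forward and backward links --- but plausibility is not a proof, and the paper's displayed formula only pins down $\chi_{32}$, not $\chi_{31}$. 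To close the argument you must either perform the third-order expansion of $\chi_{31}$ explicitly, or replace the opposite-sign device by a symmetry reduction of the terminal case to the initial case (taking care that na\"{\i}ve time reversal flips the sign of $\lambda_{cost}$ and so cannot be used alone).
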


(This lemma does not rule out the possibility of a chattering arc
meeting a singular arc \cite[Fig.20.1]{agrachev2013control}.  See
below.)

\begin{proof} 
By symmetry, we may assume that the link (if
it exists) has constant control $u=e_3$.
If we take the general solution to the adjoint equation
  with control $u=e_3$ and match it with the given initial
  conditions with normal multiplier $\lambda_{cost}=-1$, we compute that
\[
\nu_2(t) = \frac{-1 + e^{-2\sqrt{3} t} + 2 \sqrt{3} t e^{-\sqrt{3} t}}{\sqrt{3}}
\]
This function is easily checked to be negative for all $t>0$.  Recall
that $\nu_2$ has the same sign as the switching function between
controls $u=e_2$ and $u=e_3$.  PMP requires $\nu_2(t)$ to
be positive when the control is $u=e_3$.  Thus, a link that
matches initial conditions with the circle cannot be a Pontryagin
extremal.
\end{proof}


\subsection{finiteness of switching}

We need the  following simple lemma in preparation for the main
theorem (\ref{thm:finite}) of this section.

\begin{lemma}\label{lemma:sl2-basis}
Let $X:\h\to\sl$ be given by Equation (\ref{eqn:X-h}).
For all $x+iy\in\h$,
\begin{equation}\label{eqn:sl2-basis}
X,~\partials {X}{ x},~\partials {X}{ y}
\end{equation}
gives a basis of $\sl$.
\end{lemma}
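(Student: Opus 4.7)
The plan is to give two proofs and favor the cleaner geometric one. I would sketch both.

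\textbf{Direct approach.} From Equation (\ref{eqn:orbit}),
\[
X(x,y)=\begin{pmatrix} x/y & -x^{2}/y-y \\ 1/y & -x/y\end{pmatrix},
\]
so differentiating entry-by-entry gives $\partial X/\partial x$ and $\partial X/\partial y$ in closed form. Using the standard coordinates $(c_{11},c_{12},c_{21})$ on $\mathfrak{sl}_2(\mathbb{R})$, I would write the three matrices as rows of a $3\times 3$ matrix and compute its determinant. A short simplification yields $-2/y^{2}$, which is nonzero on $\mathfrak{h}$. This is essentially a one-line calculation once the entries are laid out.

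\textbf{Geometric approach (preferred).} By the lemma following (\ref{eqn:orbit}), $X(x,y)$ lies in the adjoint orbit $\mathcal O$ of $J$, and the identification (\ref{eqn:X-h}) makes the map $(x,y)\mapsto X(x,y)$ a diffeomorphism $\mathfrak h\to\mathcal O$. Hence $\partial X/\partial x$ and $\partial X/\partial y$ form a basis of the tangent space $T_{X}\mathcal O$, which is $2$-dimensional inside the $3$-dimensional $\mathfrak{sl}_2(\mathbb R)$. It remains to show that $X$ itself does not lie in $T_{X}\mathcal O$. Since $T_{X}\mathcal O$ is the image of $\operatorname{ad}_{X}\colon \mathfrak{sl}_2\to\mathfrak{sl}_2$, every tangent vector has the form $[Y,X]$, and the invariant form $\langle A,B\rangle=\operatorname{trace}(AB)$ satisfies
\[
\langle X,[Y,X]\rangle=\operatorname{trace}(XYX-X^{2}Y)=0
\]
by cyclic invariance. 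Thus $T_{X}\mathcal O\subseteq X^{\perp}$. On the other hand, Cayley--Hamilton applied to $X\in\mathfrak{sl}_2$ with $\det(X)=1$ (Equation (\ref{eqn:unit})) gives $X^{2}=-I$, so $\langle X,X\rangle=\operatorname{trace}(X^{2})=-2\ne 0$. Therefore $X\notin X^{\perp}\supseteq T_{X}\mathcal O$, and adjoining $X$ to the basis of $T_{X}\mathcal O$ produces a basis of $\mathfrak{sl}_2(\mathbb R)$.

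\textbf{Anticipated obstacle.} There is no serious obstacle; the only subtlety is the standard fact that the orbit map is a diffeomorphism (which is already established in the paper) and that a regular semisimple element of $\mathfrak{sl}_2$ is not self-orthogonal. For the paper it may be cleaner to simply cite the explicit determinant computation $-2/y^{2}$, since this also records the volume element along $\mathcal O$ in the $(x,y)$ chart and may be reused later.
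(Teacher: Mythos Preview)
Your direct approach is exactly the paper's proof: it writes the three elements in the standard coordinates of $\sl$ and computes the determinant of the change-of-basis matrix, obtaining absolute value $2/y^{2}$.

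Your geometric argument is a genuinely different and correct route. Instead of computing a determinant, you use the structural fact that for a regular semisimple $X\in\sl$ the tangent space $T_{X}\mathcal O=\op{im}(\op{ad}_{X})$ lies in $X^{\perp}$ for the trace form, while $\ang{X,X}=\op{trace}(X^{2})=-2\ne0$; hence $X$ completes any basis of $T_{X}\mathcal O$ to a basis of $\sl$. This is conceptually cleaner and explains \emph{why} the result is coordinate-free, whereas the paper's computation merely verifies it. The paper's approach, in turn, gives the explicit Jacobian $2/y^{2}$, though the paper never reuses this number. One point worth making explicit in your write-up: the claim that $\partial X/\partial x,\ \partial X/\partial y$ span $T_{X}\mathcal O$ needs the parametrization to be an immersion; this follows because the inverse $X\mapsto(c_{11}/c_{21},\,1/c_{21})$ is visibly smooth on the orbit, but the paper's lemma only states bijectivity, so a one-line remark would close that small gap.
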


\begin{proof}
Let $L$ be a linear transformation that sends the
the standard basis:
\[
\begin{pmatrix}1&0\\0&-1\end{pmatrix},
\quad
\begin{pmatrix}0&1\\0&0\end{pmatrix},
\quad
\begin{pmatrix}0&0\\1&0\end{pmatrix}.
\]
to the three vectors (\ref{eqn:sl2-basis}).  
The absolute value of the determinant of the linear
transformation $L$ is $2/y^2\ne0$.
\end{proof}

\begin{theorem}\label{thm:finite}
  Let $\lambda:[0,t_f]\to T^*M$ be a Pontryagin extremal that does not
  meet the singular locus $\Lsing$.  Then $\lambda$ has a bang-bang
  control with finitely many switches.
\end{theorem}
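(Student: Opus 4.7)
The plan is a contradiction argument: assume that a Pontryagin extremal $\lambda:[0,t_f]\to T^*M$ avoiding $\Lsing$ has infinitely many switching times, and deduce that an accumulation point of those times forces $\lambda$ back onto $\Lsing$. As setup, introduce the switching functions $\chi_{ij}(\lambda):=H(\lambda;e_i)-H(\lambda;e_j)$ for $1\le i<j\le 3$, and write $H^i:=H(\,\cdot\,;e_i)$. Because the controlled vector fields and the adjoint equations are real-analytic (the explicit closed-form solutions in Sections \ref{sec:e2e3} and \ref{sec:e1} make this manifest), the lifted trajectory is real-analytic on each link, so $\chi_{ij}(\lambda(t))$ is analytic on each link and Lipschitz globally. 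The key calculation uses the Poisson-bracket identity
\[
\frac{d}{dt}\chi_{ij}(\lambda(t)) = \{\chi_{ij},H(\,\cdot\,;u(t))\}(\lambda(t));
\]
when the active control is $e_i$ or $e_j$, antisymmetry collapses the right-hand side to the control-independent expression $\{H^i,H^j\}(\lambda(t))$, so $\chi_{ij}'$ is continuous across any switch between $e_i$ and $e_j$.

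By compactness of $[0,t_f]$, infinitely many switches accumulate at some $t^*$. The pigeonhole principle selects a pair $\{i,j\}$ and a subsequence of switching times $t_n\to t^*$ at which the control jumps between $e_i$ and $e_j$, so $\chi_{ij}(t_n)=0$ and, by continuity, $\chi_{ij}(t^*)=0$. In the main case, where only the pair $\{e_i,e_j\}$ is used in some neighborhood of $t^*$, Rolle's theorem applied to the analytic $\chi_{ij}$ on each link $(t_n,t_{n+1})$ yields an interior zero $\tau_n$ of $\chi_{ij}'$; passing to the limit via the continuity established above gives $\chi_{ij}'(t^*)=\{H^i,H^j\}(\lambda(t^*))=0$. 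An inductive bootstrap continues: although $\chi_{ij}^{(m)}$ generically jumps across switches, the size of the jump is expressible as a Poisson bracket of iterated $\{H^i,H^j\}$ brackets with the lower-order switching quantities, which already vanish at $t^*$ by the induction hypothesis. Hence all iterated brackets $\operatorname{ad}_{H^i}^m\{H^i,H^j\}(\lambda(t^*))$ and $\operatorname{ad}_{H^j}^m\{H^i,H^j\}(\lambda(t^*))$ vanish for every $m\ge 0$. In the remaining case in which all three controls appear infinitely often in every neighborhood of $t^*$, the same argument applies to two linearly independent switching functions, yielding the analogous vanishing for $\{H^1,H^2\}$, $\{H^1,H^3\}$, $\{H^2,H^3\}$ at $\lambda(t^*)$.

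The vanishing of $\chi_{ij}$ and all its derivatives at $t^*$, combined with the PMP conditions $H^+(\lambda(t^*))=0$ and the analytic adjoint equations, implies that along the analytic extension of the trajectory through $\lambda(t^*)$ the Hamiltonian $H(\,\cdot\,;u)$ is independent of $u$ to infinite order, so the maximizer face $U_\lambda$ has positive dimension on an open interval; that is, $\lambda$ is tangent to a singular arc at $t^*$. By the classification in Section \ref{sec:sing}, the only singular trajectories of the problem are the circular ones, which comprise exactly $\Lsing$; hence $\lambda(t^*)\in\Lsing$, contradicting the standing hypothesis and finishing the proof. The main obstacle is the inductive step of Paragraph~2 that controls the jumps of higher derivatives of the switching function across the accumulating switches; here Lemma \ref{lemma:sl2-basis}, which shows that $X,\partial_x X,\partial_y X$ span $\sl$, is the non-degeneracy input that guarantees only a bounded number of Poisson-bracket equations suffice to pin $\lambda(t^*)$ inside the low-dimensional set $\Lsing$ determined by the explicit formula $\Lambda\equiv -\tfrac{3}{2}J$, $\nu\equiv 0$, $z\equiv i$.
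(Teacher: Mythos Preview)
Your proof has a genuine gap in the ``main case'' where only the pair $\{e_i,e_j\}$ is used near $t^*$. Even if the inductive bootstrap succeeds, you obtain only the vanishing of $\chi_{ij}$ and its iterated brackets at $\lambda(t^*)$; this is an \emph{edge}-singular condition ($U_{\lambda(t^*)}\supseteq[e_i,e_j]$), not the full-$U$ singular condition. The set $\Lsing$ is by definition the full-$U$ locus (the circle: $\Lambda=-\tfrac32 J$, $\nu=0$, $z=i$), and edge-singular points need not lie in it. Your appeal to ``the classification in Section~\ref{sec:sing}'' does not close the gap: the lemma there excluding edge-singular arcs is stated and proved only for a Reinhardt (globally optimal) trajectory, not for an arbitrary Pontryagin extremal, and in any case concerns \emph{arcs}, not instantaneous conditions at a point through which the actual (chattering) trajectory is not analytic.

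The paper treats exactly this two-control case by a completely different argument in Theorem~\ref{lemma:no-chatter2}: it expands $\nu_2$ along each constant-control flow as a power series, shows by explicit computation that the leading order satisfies $d\le 3$ and that both the order $d$ and the leading coefficient $a_d$ are control-independent, and then bounds the number of zeros of $\nu_2$ directly via Weierstrass preparation---no reduction to $\Lsing$ is attempted or possible here. In the complementary case where \emph{two} independent switching functions have zeros accumulating at $t^*$, the paper's argument is first-order rather than infinite-order: in canonical coordinates $(\xi_i,\mu_i)$ with $\mu_1,\mu_2$ the switching functions, the equations $\mu_i(0)=\mu_i'(0)=0$ together with $H=0$ and Lemma~\ref{lemma:sl2-basis} force $B(0):=\Lambda(0)-\tfrac32 J\lambda_{cost}=0$; a mean-value argument then gives $B'(0)=0$, hence $[J,X(0)]=0$, $X(0)=J$, and $\lambda(0)\in\Lsing$. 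Your infinite bracket hierarchy is unnecessary in that case and, as written, not justified in either case: the jumps of $\chi_{ij}^{(m)}$ occur at the switch times $t_n$ and are controlled by quantities evaluated at $t_n$, not at $t^*$, so passing to the limit requires more than the induction hypothesis at $t^*$.
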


\begin{remark} In terms of Reinhardt's problem, the theorem implies
  that an extremal trajectory $\lambda$ that does not meet the
  singular locus $\Lsing$ defines a hexagonally-symmetric disk
  $D(g_\lambda,z_\lambda)$ whose boundary is a smoothed polygon,
  consisting of finitely many straight edges and hyperbolic arcs.  A
  working hypothesis (\ref{work:lsing}) in the final section describes
  what would be needed in order to remove the unwanted assumption
  that $\lambda$ does not meet the singular locus $\Lsing$, and
  to prove unconditionally that the Reinhardt trajectory is a smoothed polygon.
\end{remark}

\begin{proof} Fix a Pontryagin extremal trajectory $\lambda$.  By the
  compactness of the interval $[0,t_f]$, it is enough to show that
  there are finitely many switches in a neighborhood of each $t\in
  [0,t_f]$.  By reparametrization, we may assume that $t=0$.

  Here, we give the proof when there are at least two independent
  switching functions $\chi_{ij}$ such that $t=0$ is a limit point
  of the zero set of $\chi_{ij}$.  Theorem
  \ref{lemma:no-chatter2} gives the proof when $t=0$ is a limit
  point of the zero set of only one independent
  switching function.

We define a canonical coordinate system $(\xi_1,\xi_2,\mu_1,\mu_2)$ on
the symplectic manifold $T^*\hstar$ as follows.  Let
\[
\xi_1 = x,\quad \xi_2 =  \frac{3 y^2}{1 + \sqrt{3} x} + \sqrt{3} x = 
\frac{3 (x^2+y^2) + \sqrt{3} x}{1 + \sqrt{3} x}.
\]
A short calculation shows that with respect to these coordinates,
$\hstar$ is given by a semi-infinite rectangle:
\[
-\frac{1}{\sqrt{3}} < \xi_1 < \frac{1}{\sqrt{3}},\quad 1 < \xi_2.
\]
Let $\mu_i:T^*\hstar\to\ring{R}$ be the usual canonical coordinates:
\[
\lambda = \mu_1(\lambda) d\xi_1 + \mu_2(\lambda) d\xi_2,\quad
\lambda\in T^*\hstar.
\]

These canonical coordinates have been chosen to be adapted to the
switching functions:
\begin{align*}
\chi_{13} &= \mu_1 \frac{6 y^3}{1-3x^2 - 3y^2};\\
\chi_{23} &= \mu_2 \frac{12\sqrt{3}y^3}{(1-\sqrt{3} x)(1 + \sqrt{3} x)^2}.
\end{align*}
Thus, up to irrelevant displayed positive factors, we may take $\mu_1$ and
$\mu_2$ to be the switching functions.

Using the rotational symmetries of $U$, we may assume without loss
of generality that $t=0$ is a limit point of the zero sets of both
 $\mu_1$ and $\mu_2$.

 We set $B:= \Lambda - 3 J \lambda_{cost}/2$.  Then Equation
 (\ref{eqn:adjoint}) becomes
\begin{equation}\label{eqn:B'}
B' = \frac{3}{2} \lambda_{cost} [J,X] + [B,X].
\end{equation}
The Hamiltonian expressed in canonical coordinates takes the form
\begin{equation}\label{eqn:ham-xi}
H = g_1 \mu_1 + g_2 \mu_2 + \ang{B,X}
\end{equation}
for some vector field $(g_1,g_2)$ depending on the control $U$.
Recall that $H(\lambda(t),u(t))\equiv 0$ along an extremal $\lambda$.
The adjoint equation for $\mu_i$ is
\begin{align}\label{eqn:mu'}
\begin{split}
\mu_1' &= -\partials{g_1}{\xi_1} \mu_1 - 
\partials{g_2}{\xi_1} \mu_2 -\ang{B,\partials{ X}{ \xi_1}}\\
\mu_2' &= -\partials{g_1}{\xi_2} \mu_1 - 
\partials{g_2}{\xi_2} \mu_2 -\ang{B,\partials{ X}{ \xi_2}}\\
\end{split}
\end{align}
We know that $\mu_1,\mu_2$, and $B$ are absolutely continuous by the
general properties of optimal control.  By the form of the right-hand
side of Equation (\ref{eqn:B'}), we see that $B$ is continuously
differentiable.

We claim that $\mu_i$ are continuously differentiable along a
Pontryagin extremal trajectory.  At issue are the jumps in the
functions ${\partial g_i}/{\partial \xi_j}$ for arbitrary control
functions $u$ on the right-hand side of (\ref{eqn:mu'}).  These
partials derivatives are bounded, so that the form of Equation
(\ref{eqn:mu'}) implies continuity of $\mu_i'$ at $\mu_1=\mu_2=0$.
Near a point where exactly one switching function is zero, the control
is confined to an edge of $U$.  We argue from the form of Equation
(\ref{eqn:mu'}) (or from general facts about switching functions) that
at $\mu_1=0$, $\mu_2=0$, $\chi_{12}=0$ respectively, the right-hand
side of Equation (\ref{eqn:mu'}) does not depend on the control
restricted to the corresponding edge.  This proves the continuity
claim.

We assume that $\mu_1$ and $\mu_2$ have infinitely many zeros 
that accumulate at
$t=0$.  By continuity and Rolle's theorem,
\[
\mu_1(0)=\mu_1'(0)=0,\quad\mu_2(0)=\mu_2'(0)=0.
\]
By Equations (\ref{eqn:ham-xi}) (\ref{eqn:mu'}) and Lemma
\ref{lemma:sl2-basis}, we have $B(0)=0$.

The lifted trajectory is not abnormal, for otherwise 
\[
\lambda_{cost} =
\Lambda(0)=B(0)=\mu_1(0)=\mu_2(0)=0
\]
 contrary to the PMP-projectivity condition.
We set $\lambda_{cost}=-1$.

We claim that $B'(0)=0$. We have
\[
\ang{B'(0),X(0)} = (3/2)\lambda_{cost}\ang{[J,X(0)],X(0)} = 0.
\]
In light of Lemma \ref{lemma:sl2-basis}, to prove the claim, we  assume for a
contradiction that
\begin{equation}\label{eqn:b'-pos}
\epsilon\ang{B'(0),\partials{X}{\xi_i}(0)}>0
\end{equation}
for some $i$ and choice of sign $\epsilon\in\{\pm 1\}$.  By the
mean-value theorem $\mu_j(t) = t\mu_j'(\tau_j) = o(t)$ for some
$\tau_j\in (0,t)$.  By Equation (\ref{eqn:b'-pos}), the forcing term
$\ang{B,\partial X/\partial \xi_i}$ in Equation (\ref{eqn:mu'})
dominates near $t=0$, and we have $\epsilon \mu_i'(t) > C t >0$ for
some $C>0$ and for all sufficiently small $t>0$.  This contradicts our
assumption that $t=0$ is a limit point of the zero set of $\mu_i$.

Note
\[
0=B'(0)=(3/2)\lambda_{cost} [J,X(0)].
\]
So $[J,X(0)]=0$,  which implies $X(0) = J$ so that $x_0=0$ and $y_0=1$.
This completes the proof, except for the missing piece supplied by 
Theorem \ref{lemma:no-chatter2}.
\end{proof}

\begin{theorem}\label{lemma:no-chatter2}
  Let $\lambda:[0,t_f]\to T^*M$ be a Pontryagin extremal that does not
  meet the singular locus $\Lsing$.  Assume that two of the switching
  functions (say $\chi_{12}$ and $\chi_{13}$) only have finitely many zeros
  in some sufficiently small neighborhood of $t=0$.  Then the third
  switching function $\chi_{23}$ also has only finitely many zeros in
  some sufficiently small neighborhood of $t=0$.
\end{theorem}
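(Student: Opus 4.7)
The plan is to argue by contradiction: assume $\chi_{23}$ has infinitely many zeros accumulating at $t=0$, and derive a contradiction by case analysis on the sign pattern of $\chi_{12}$ and $\chi_{13}$ in a small one-sided interval. By the continuous differentiability of the switching functions along Pontryagin extremals (established in the proof of Theorem \ref{thm:finite}) together with Rolle's theorem applied to consecutive zeros, continuity gives $\chi_{23}(0) = 0$ and Rolle gives $\chi_{23}'(0) = 0$. Without loss of generality the accumulation happens on some $(0,\delta)$; by the finiteness hypothesis on zeros of $\chi_{12}$ and $\chi_{13}$, shrink $\delta$ so that each of these two switching functions has constant sign on $(0,\delta)$, producing a sign pattern $(s_{12},s_{13})\in\{+,-\}^2$.

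In the three patterns $(s_{12},s_{13})\in\{(+,+),(+,-),(-,+)\}$, the PMP selects a unique constant control $u\in\{e_1,e_3,e_2\}$ on $(0,\delta)$, so the lifted trajectory is real-analytic on $(0,\delta)$ and extends analytically to $t=0$ from the right. Then $\chi_{23}$ is real-analytic on $[0,\delta)$, and accumulation of zeros at $t=0$ forces $\chi_{23}\equiv 0$, equivalently $\mu_2\equiv 0$. Using the explicit solution to the adjoint equation with constant control from Sections \ref{sec:e2e3}--\ref{sec:e1}, together with the Hamiltonian identity $H\equiv 0$ and the $B$-equation (\ref{eqn:B'}), one checks directly that the constraint $\mu_2\equiv 0$ is incompatible with the PMP-projectivity condition on the costate. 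The three sub-cases reduce to one by the rotational symmetry of Section \ref{sec:sym}, so the verification need only be carried out for $u=e_1$.

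The remaining sign pattern $(-,-)$ is the chattering case, in which the control alternates between $e_2$ and $e_3$ on $(0,\delta)$ according to the sign of $\chi_{23}$. At a switching time $t_n\in(0,\delta)$, $\mu_2(t_n)=0$ and $\mu_2'(t_n)$ is control-independent (as used in the proof of Theorem \ref{thm:finite}), but $\mu_2''$ differs between the two controls: using $f_2(x,y;u)=y^2/(x+m(u))$ with $m(e_3)=1/\sqrt{3}$ and $m(e_2)=-1/\sqrt{3}$, a direct calculation shows the jump $\mu_2''^{(e_2)}(t_n)-\mu_2''^{(e_3)}(t_n)$ is proportional to $\mu_2'(t_n)$ with a nonvanishing coefficient. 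Consistency of the quadratic expansions on the two sides of $t_n$ with the PMP-imposed sign of $\chi_{23}$ demands, in the chattering limit, a Legendre--Clebsch-type strict inequality $\mu_2''^{(e_2)}>0>\mu_2''^{(e_3)}$; but $\mu_2'(t_n)\to\mu_2'(0)=0$, so the jump collapses and both second derivatives agree at $t=0$, forcing $\mu_2''(0)=0$. Iterating on successive orders of vanishing forces $\mu_2^{(k)}(0)=0$ for all $k$; analyticity of the adjoint ODE under each constant control then yields $\mu_2\equiv 0$ on a full one-sided neighborhood of $0$ with either control, returning to the previous case and yielding a contradiction.

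The principal obstacle is the chattering case: making the iteration on orders of vanishing rigorous requires careful bookkeeping of the jumps of successive higher derivatives of $\mu_2$ across switching times and verifying that they collapse in the chattering limit, and then controlling the pointwise decay of $\mu_2$ to infinite order into an identical vanishing statement on a sub-interval. A secondary obstacle is the explicit incompatibility check in the constant-control case, which I expect to reduce to a routine computer-algebra calculation using the explicit adjoint solutions already developed in the paper.
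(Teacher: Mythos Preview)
Your case analysis on the sign pattern $(s_{12},s_{13})$ is more explicit than the paper's, which simply asserts $\chi_{12}<0$ and $\chi_{13}<0$ near $t=0$ (the $(-,-)$ pattern) without discussing the others. In the constant-control patterns, analyticity does reduce the question to ruling out $\nu_2\equiv 0$ on an interval; your proposed ``PMP-projectivity'' check for this is plausible but unverified, and in any case the paper's order-of-vanishing bound (below) handles the $e_2,e_3$ sub-cases directly.

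The genuine gap is in the chattering case $(-,-)$. Your plan is to iterate the vanishing of $\mu_2^{(k)}(0)$ to all $k$, then invoke analyticity under a constant control to force $\mu_2\equiv 0$ on a subinterval. This iteration cannot succeed. The paper computes, for the constant-control analytic extensions $\nu_2^{(i)}(\lambda_0,t)$ with $i\in\{2,3\}$, that the order of vanishing $d$ at $t=0$ and the leading coefficient $a_d$ are \emph{independent of $i$}, and crucially that $d\le 3$ always, with $a_3^{(i)}(\lambda_0)\equiv -1$ whenever $d>2$. Thus the third-order term never vanishes, and your step ``iterating on successive orders of vanishing forces $\mu_2^{(k)}(0)=0$ for all $k$'' fails at $k=3$. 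Moreover, your mechanism for forcing each successive derivative to vanish --- the claimed Legendre--Clebsch-type inequality $\mu_2''^{(e_2)}>0>\mu_2''^{(e_3)}$ at chattering switches --- is not imposed by PMP: at a typical switching time $\mu_2'(t_n)\ne 0$ and the crossing is transversal, so second-order data carries no sign constraint. Your observation that the jump in $\mu_2''$ collapses as $\mu_2'(t_n)\to 0$ shows only that the one-sided second derivatives agree in the limit, not that their common value is zero.

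The paper's route is instead to combine the control-independence and the uniform bound $d\le 3$ with the Weierstrass preparation theorem applied to $\nu_2^{(i)}(\lambda_0,t)$ as an analytic function of $(\lambda_0,t)$ near $(\lambda(0),0)$: the zero set is that of a polynomial in $t$ of degree at most $d(\lambda(0))\le 3$, uniformly in $\lambda_0$. Since at every non-switching time the chattering trajectory coincides locally with some $\nu_2^{(i)}(\lambda_0,\cdot)$ (extended back to $t=0$) for $\lambda_0$ close to $\lambda(0)$, its zeros are controlled by these Weierstrass polynomials, yielding the finite bound. Your jump computation is the $d=2$ shadow of the paper's control-independence statement; the missing idea is to recognize that the order is uniformly bounded with control-independent leading term, and to use this to bound zeros directly rather than to push toward infinite-order vanishing.
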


\begin{proof} 
  We may take the switching function $\chi_{23}$ to be $\nu_2$ (up to
  a positive factor).  We assume for a contradiction that
  $t=0$ is a limit point of the zero
  set of $\nu_2$.  We recall that $\nu_2$ is continuous by the
  PMP.  These observations imply that $\nu_2(0)=0$.

We claim that the control function $u$ takes values in the edge
$U_{23}=\{(0,*,*)\}\subset U$ (up to a set of measure zero).  In fact,
$\chi_{12}<0$ and $\chi_{13}<0$ except at finitely many points in a
small neighborhood of $t=0$.  So $U_{\lambda(t)}\subseteq U_{23}$,
and the claim follows.

We claim that $\nu_2$ is continuously differentiable.  This follows by
examining the ODE it satisfies:
\[
\nu_2' = -f_{1y}\nu_1 - f_{2y} \nu_2 - \ang{B,\partials{X}{y}}.
\]
The term $f_{1y}$ is independent of the control along the edge
$U_{23}$ and is therefore continuous.  The term $f_{2y}$ is bounded
and has jumps only when $\nu_2=0$.  The terms $\nu_1,\nu_2,B,\partial
X/\partial y$ are continuous.  This gives the claim.

We claim that $\nu_2'(0)=0$.  In fact, by Rolle, $\nu_2'(t)$ has
infinitely many zeros that accumulate at $0$.  The claim follows.

We define $\lambda^{(i)}(\lambda_0,t)$ to be the lifted trajectory
with constant control $u=e_i\in U_{23}$ and initial condition
$\lambda^{(i)}(\lambda_0,0)=\lambda_0\in T^*M$, for $i=2,3$.  The
lifted trajectories $\lambda^{(i)}(\lambda_0,t)$ are real analytic in
$\lambda_0$ and $t$.  Let $\nu^{(i)}_2(\lambda_0,t)$ be the
$\nu_2$-component of $\lambda^{(i)}(\lambda_0,t)$.  We have a leading
term
\[
\nu^{(i)}_2(\lambda_0,t) = t^d a_d + O(t^{d+1}),\quad
d = d^{(i)}(\lambda_0),\quad a_d = a_d^{(i)}(\lambda_0)\ne0.
\]
We restrict to parameters $\lambda_0$ near $\lambda(0)$ such that
$\nu_2^{(i)}(\lambda_0,0)=0$ so that $d^{(i)}(\lambda_0)>0$.

We claim that $d^{(i)}(\lambda_0) = d(\lambda_0)$ and
$a_d^{(i)}(\lambda_0)=a_d(\lambda_0)$ are independent of $i$ and that
$1 \le d(\lambda_0) \le 3$.  To prove the claim, we compute the power
series expansion of $\nu_2^{(i)}(\lambda_0,t)$ at $t=0$ using the
explicit solutions to the ODE; and we compare the coefficients for
$i=2,3$.  Explicit formulas are found in the computer code.  When
$d^{(i)}(\lambda_0)> 2$, we compute that
$a_3^{(i)}(\lambda_0)=-1\ne0$, which is independent of both $i$ and
$\lambda_0$.  In particular $d\le 3$.  This proves the claim.

The theorem follows more or less from this last claim.  By the
Weierstrass preparation theorem, the zero set of
$\nu_2^{(i)}(\lambda_0,t)$ coincides with that of a polynomial of
degree $d(\lambda(0))$ in $t$ for all small $t$ and $\lambda_0$ in a
neighborhood of $\lambda(0)$. The idea is that switching function
$\nu_2$ is closely approximated by both of the analytic functions
$\nu_2^{(i)}$, for $i=2,3$, so that $\nu_2$ can have at most $d\le 3$
zeros near $t=0$.

Note that $d(\lambda_0)\le d(\lambda(0))$ for
$\lambda_0$ near $\lambda(0)$.  

Assume first that $d(\lambda(0))=1$.  Because we are restricting to
parameters $\lambda_0$ such that $d(\lambda_0)>0$, we have
$d(\lambda_0)=1$ for all $\lambda_0$ near $\lambda(0)$.  The
paths $\nu_2^{(i)}(\lambda_0,t)$ meet the switching
hypersurface transversely.  The continuous differentiability of $\nu_2$
implies a single switch from control $u=e_i$ to $u=e_j$ at the
switching hypersurface.

In the remaining case, $d(\lambda(0))\in\{ 2,3\}$.  Pick small $t_0$
that is not a switching time.  Then $U_{\lambda(t_0)}=\{e_i\}$ for some
$i\in\{2,3\}$.  Then $\nu_2(t_0) = \nu^{(i)}_2(\lambda_0,t_0)$ for some
$\lambda_0$ near $\lambda(0)$, where $d(\lambda_0)\le d(\lambda(0))$.
Because $d\le 3$, the Weierstrass polynomials for
$\nu^{(i)}_2(\lambda_0,t)$ at $t=0$ have at most one zero $t_1$ of
multiplicity greater than $1$.  The time $t_1$, when it exists,
is independent of $i$.  Thus, every time $t\ne t_1$ lies on a
semi-infinite interval $(t_1,\infty)$ or $(-\infty,t_1)$ on which
$\nu_2$ meets the switching surface transversely, with isolated
switchings between controls $u=e_2$ and $u=e_3$ before leaving the
small neighborhood of  $t=0$.  This implies that $\nu_2$ does not
have a limit point at  $t=0$.
\end{proof}

\section{Discussion of proposed endgames}

In this section we offer some speculations about how the proof of the
Reinhardt conjecture might be completed.

\subsection{smoothed polygons}

We have proved that a Pontryagin extremal trajectory $\lambda$ that
does not meet the singular set $\Lsing$ gives a smoothed polygon
$D(g_\lambda)$.  Suppose that $\lambda$ meets $\Lsing$.  We have
proved that $\lambda$ does not remain in $\Lsing$ for any time
interval and that the only way to approach $\Lsing$ is through
chattering.  These are very restrictive conditions.

We suggest a working hypothesis that would complete the proof that the
solution to the Reinhardt problem is a smoothed polygon.  Our
restrictive conditions reduce the analysis to a small neighborhood of
a single point $\lsing$ in the cotangent space

If a lifted trajectory $\lambda$ meets $\Lsing$, we may assume that the
meeting occurs at $t=0$ and that $\lambda(t)\not\in\Lsing$ for some
sufficiently small time interval $t\in(0,t_0]$.  To be concrete, we
may assume after applying an affine transformation that
$\lambda(0)=\lsing\in\Lsing$.  Then the lifted trajectory on this interval 
has a bang-bang control with infinitely many switching times
\begin{equation}
t_1 > t_2 > \cdots > t_k \cdots > 0,
\end{equation}
where $t_0\ge t_1$ and $\lim_{k\mapsto\infty} t_k = 0$.  If we could show that such a
trajectory is not globally optimal among trajectories in $M$ with the
same endpoints, then chattering is nonoptimal, and we would conclude
established that the solution to the Reinhardt problem is a smoothed
polygon.

\begin{working}\label{work:lsing} 
  Let $\lambda$ be a chattering extremal trajectory with
  bang-bang control starting at $\lambda(0)=\lsing$ as just described.
  Then there exists $t^*\in (0,t_0)$ and a competing lifted trajectory
  $\lambda^*$ on $[0,t^*]$ with lower cost
\[
\op{cost}(\lambda^*)< \op{cost}(\lambda)
\]
over the interval $[0,t^*]$, 
and having the same endpoints in $M$ as $\lambda$:
\begin{align*}
(g_\lambda(0),z_\lambda(0))
&=(g_{\lambda^*}(0),z_{\lambda^*}(0))=(I,i)\in \SL\times\h,\\
(g_\lambda(t^*),z_\lambda(t^*))
&=(g_{\lambda^*}(t^*),z_{\lambda^*}(t^*)).
\end{align*}
\end{working}

To prove this working hypothesis, various standard methods for the
treatment of chattering controls might be helpful: blowing-up along
the singular locus, the Poincar\'e map, scaling, and
self-similarity.  See \cite{zelikin2012theory}.

\subsection{a neighborhood of the circle}

We have constructed extremal trajectories with bang-bang controls
that have an arbitrarily large
number of switches.  Each neighborhood of $V$ of $\Lsing$ contains
all but finitely many of these extremal trajectories.  We expect that 
extremal lifted trajectories $\lambda$
that remain close to $\Lsing$ to give hexagonally symmetric
disks $D(g_\lambda)$ that are approximately circles.  In particular,
they should have cost higher than that of the smoothed octagon.

\begin{working}  There exists (an explicit) neighborhood $V=V_{sing}$ of $\Lsing$ and
a natural number $N_V$ such that any extremal trajectory that
does not meet $V$ has at most $N_V$ switches.  Every extremal trajectory
meeting $V$ has cost greater than that of $\lambda_{oct}$.
\end{working}

\subsection{heuristics near the boundary of $\hstar$}

Recall that $\hstar$ is the open subset
of the upper-half plane that satisfies the star inequalities.
We present some heuristics that suggest that trajectories that come
close to the boundary of $\hstar$ necessarily give hexagonally symmetric disks
whose area is greater than that of the smoothed octagon.  We formulate
this as a working hypothesis.  

\begin{working}\label{hyp:star}   There exists (an explicit) neighborhood $V=V_\star$ of the
  boundary of $\hstar$ such that if (the projection $z_\lambda$ to
  $\hstar$ of) an extremal admissible trajectory $\lambda$ meets $V$,
  then the cost of $z_\lambda$ is greater than the cost of the
  smoothed octagon.
\end{working}

\begin{remark} The point of the working hypothesis is to allow us to
  replace $\hstar$ with a slightly smaller compact set
  $K=\hstar\setminus V$.  The star inequalities (Remark
  \ref{rem:star}) are strict because when equality is obtained the
  smallest centrally symmetric hexagon $H$ containing $D$ degenerates
  to a parallelogram.  However, a parallelogram is the smallest
  centrally symmetric hexagon $H$ containing $D$ only if $D=H$ is
  itself a parallelogram of area $\sqrt{12}$.  This suggests that
  extremal trajectories that come sufficiently close to the boundary
  of $\hstar$ are approximately parallelograms of approximate area
  $\sqrt{12}$.  Such $D$ are far from optimal.  By making this
  intuition rigorous, a proof of the working
  hypothesis might be obtained.
\end{remark}

\subsection{direct computer search}

We suggest two different ways that the Reinhardt conjecture might be
completed from here: direct computer search or geometric methods.
We can summarize the previous subsections by saying that we expect
the only interesting lifted trajectories in the cotangent space to pass
\begin{enumerate}
\item near the boundary of $\hstar$ (where $D$ is a near parallelogram);
\item near $\Lsing$ (where $D$ is a near circular disk, including
  smoothed polygons $D_{6k\pm 2}$ with many sides);
\item near $\lambda_{oct}$ (where $D$ is a smoothed octagon).
\end{enumerate}
Let us assume that we have a version of Theorem \ref{thm:local-min} that gives
an explicit neighborhood $V_{oct}$ of $\lambda_{oct}(0)$ on which the
local optimality of $\lambda_{oct}$ holds.

Each Pontryagin extremal trajectory is determined by an initial
condition in $\ring{R}_{cost}\times T_I^*M$.  It is convenient to
consider the projectivized variant:
\[
\ring{P}(\ring{R}_{cost}\times T_I^* M^\star),
\]
This is an explicit $7$-dimensional manifold.  It can be
reduced by two dimensions to a $5$-dimensional manifold
by the vanishing of the maximized
Hamiltonian (\ref{sec:pmp}) and setting the start time $t=0$ at a
switching time between controls $u=e_3$ and $u=e_2$, which
gives $\nu_2^0=0$.

We might try to make a direct computer search through this space (say
using interval arithmetic) and show that there is nothing better than
the smoothed octagon.  We might find for example by explicit search
that the smoothed polygons of Section \ref{sec:polygon} are the only
Pontryagin extremal trajectories (away from the singular arc).

Using our working hypotheses, by excluding a neighborhood $V_\star$ of the
boundary of $\hstar$, a neighborhood $V_{oct}$ of $\lambda_{oct}$, and a
neighborhood $V_{sing}$ of $\Lsing$, we expect numerically stable lifted trajectories
with a uniformly bounded number of switches.  Given an initial
condition $\lambda_0$ in the $5$-dimensional manifold, we extend the
trajectory until it enters one of these excluded neighborhoods $V$ (in
which case we reject $\lambda_0$), until $\lambda$ meets the terminal
conditions (in which case we compare the trajectory's cost to
$\lambda_{oct}$), or until $t\ge \pi/3$ (in which case we reject the
trajectory it for having higher cost than the circle by Equation \ref{eqn:cost-min}).

\subsection{geometric methods}

The transversality conditions of PMP imply that a Pontryagin extremal
lifted trajectory is a closed loop $\lambda$ in 
\[
\ring{P}(\ring{R}_{cost} \times T^*M \setminus \Lsing)/\ang{R}.
\]
  (We remove a neighborhood of the singular locus $\Lsing$.)

We can consider an optimization over each homology class.
We have homotopy group $\pi_1(\SL)=\ring{Z}$ and the canonical map
$\pi_1(\SL)\to \pi_1(\SL/R)=\ring{Z}$ is multiplication by $3$.  The
Reinhardt lifted trajectory gives a generator of $\pi_1(\SL/R)$.  We
may restrict to such trajectories.

We might try to adapt the arguments of \cite[Chapter
17]{agrachev2013control}: the Poincar\'e-Cartan integral invariant,
Hamilton-Jacobi-Bellman, etc.

\bibliography{refs} 
\bibliographystyle{alpha}

\end{document}